\newtheorem{thm}{Theorem}[section]
\newtheorem{Lem}[thm]{Lemma}
\newtheorem{prop}[thm]{Proposition}
\newtheorem{rem}{Remark}
\theoremstyle{definition}
\newtheorem{defn}{Definition}[section]
\theoremstyle{remark}
\begin{document}

\title[Renormalization of symmetric bimodal maps with low smoothness]{Renormalization of symmetric bimodal maps with low smoothness}

\author{Rohit Kumar$^1$, V.V.M.S. Chandramouli$^2$ }

\address{$^{1,2}$ Department of Mathematics, Indian Institute of Technology Jodhpur, \;\;\; Rajasthan, India-342037. }
\eads{ $^{1}$  \mailto kumar.30@iitj.ac.in, $^{2}$  \mailto chsarma@iitj.ac.in.}
\vspace{10pt}
 \date{\today}

\begin{abstract}
\ This paper deals with the renormalization of symmetric bimodal maps with low smoothness. We prove the existence of 
the renormalization fixed point in the space $C ^{1+Lip}$ symmetric bimodal maps. Moreover, we show that the topological entropy of the renormalization operator defined on the space of $C^{1+Lip}$ symmetric bimodal maps is infinite. Further we prove the existence of a continuum of fixed points of renormalization. Consequently, this proves the non-rigidity of the renormalization of symmetric bimodal maps.  
\end{abstract}

%
\vspace{2pc}
\noindent{\it Keywords}: Renormalization fixed point, symmetric bimodal maps, low smoothness, non-rigidity.
%
%
%
%
\eqnobysec

\section{Introduction}
Renormalization is a technique to analyze maps having the property that the first return map to small part of the phase space resembles the original map itself. Period doubling renormalization operator was introduced by M. Feigenbaum \cite{Fe}, \cite{Fe2} and by P. Coullet and C. Tresser \cite{CT}, to study asymptotic small scale geometry of the attractor of one dimensional systems which are at the transition from simple to chaotic dynamics.

With a relatively complete understanding of the period doubling renormalization of unimodal maps, recent research in dynamical systems has focused on more complicated maps of the real line. Renormalization is very useful tool to describe the dynamics of a map in smaller scale. In particular, Jonker \& Rand \cite{JD} and V. Strien \cite{SVST} used renormalization as a natural vehicle to decompose the non-wandering set in a hierarchical manner, for unimodal maps. 
The multimodal maps are interesting as generalizations of unimodal maps, as well as for their applications. For example, in the case of bimodal maps, they are essential to the understanding of non-invertible circle maps which have been used extensively to model the transitions to chaos in two frequency systems \cite{MCT}.  D. Veitch presented some work on topological renormalization of $C^0$ bimodal maps with zero and positive entropy \cite{DVeitch}. Further, D. Smania developed a combinatorial theory for certain kind of multimodal maps and proved that for the same combinatorial type the renormalizatons of infinitely renormalizable smooth multimodal maps are exponentially close \cite{DSmania}. 

In this paper, we focus on the construction of renormalization fixed point for the family of symmetric bimodal maps with low smoothness. 
We show that there exists a sequence of affine pieces which are nested and contract to the critical points of the bimodal map corresponding to a pair of proper scaling data.
This helps us to prove that the renormalization operator defined on the space of piece-wise affine infinitely renormalizable maps has a fixed point, denoted by $f_{s^*},$ corresponding to a pair of proper scaling data $s^*$. In the next section~\ref{extsn}, we explain the extension of the renormalization fixed point $f_{s^*}$ to a $C^{1+Lip}$ symmetric bimodal map. In section~\ref{entropy}, we describe the topological entropy of renormalization defined on the space of $C^{1+Lip}$ symmetric bimodal maps. Furthermore, we prove the existence of another fixed point of renormalization by considering the small perturbation on the scaling data.  Consequently, for two different perturbed scaling data we get two Cantor attractors of renormalization fixed points. This leads to the non-rigidity of the Cantor attractors of renormalizable symmetric bimodal map with low smoothness.  \\

\noindent We recall some basic definitions.  Let $I = [a,b]$  be a closed interval. \\

 A unimodal map $\mathfrak{u}: I \rightarrow I $ is called \textit{period tripling renormalizable map} if there exists a proper subinterval $J$ of $I$ such that \\
(1)\; $J,$ $\mathfrak{u}(J)$ and $\mathfrak{u}^2(J)$ are pairwise disjoint, \\
(2)\;  $\mathfrak{u}^3(J) \subset J.$ \\
\noindent Then $\mathfrak{u}^3: J \rightarrow J$ is called a renormalization of $\mathfrak{u}.$ \\

A map $\mathfrak{u}: I \rightarrow I$ is \textit{period tripling infinitely renormalizable} map if there exists an infinite sequence $\{I_n\}_{n = 0}^\infty$ of nested intervals such that ${\mathfrak{u}^{3}|_{I_n}} : I_n \rightarrow I_n$ are renormalizations of $\mathfrak{u}$ and the length of $I_n$ tends to zero as $n \rightarrow \infty.$ \\

\noindent Let $\mathcal{U}$ be the collection of unimodal maps and $\mathcal{U}_\infty ( \subset \mathcal{U}) $ be the collection of period tripling infinitely renormalizable unimodal maps. \\

\ An interval map $f$ is \textit{piece-wise monotone} if there exists a partition of $I$ into finitely many subintervals  on each of which $f$ is strictly monotonic.\\ If three is the minimal number of such subintervals, we say $f$ is a \textit{bimodal} map.

\begin{defn}
Let $f : I \rightarrow I$ be a map with two subsets $J_l$ and $J_r$ such that  $J_l\strut^\mathrm{o} \cap J_r^{\strut\mathrm{o}} = \emptyset.$ If $f|_{J_l}$ and $f|_{J_r}$ are unimodal maps which are concave up and concave down respectively, their \textit{join}, denoted by $f|_{J_l} \oplus f|_{J_r}$, is a bimodal map whose graph is obtained by joining $f(max(J_l))$  and $f(min(J_r))$ by a $C^{1+Lip}$ curve.
\end{defn}

\begin{defn}
A bimodal map $b : I \rightarrow I ,$ having two critical points $c_l$ and $c_r,$ is said to be \textit{renormalizable} if there exists two disjoint intervals $I_l$ containing $c_l$ and  $I_r$ containing $c_r$ such that 
\begin{enumerate}
\item[(i)] $b^i(I_l) \cap b^j(I_l) = \emptyset,$ for each $i \neq j$ and $i,j \in \{0,1,2\},$ \\
$b^i(I_r) \cap b^j(I_r) = \emptyset,$ for each $i \neq j$ and $i,j \in \{0,1,2\},$ 
\item[(ii)] $b^3(I_l) \subset I_l$ and $b^3(I_r) \subset I_r,$
\item[(iii)] After applying suitable reflections and rescalings on the unimodal maps $b|_{I_l}$ and $b|_{I_r},$ the unimodal pieces $b'_{l}$ and $b'_{r}$ are joined to generate a bimodal map.
\end{enumerate}
\end{defn}

The renormalization of a symmetric bimodal map is illustrated in Figure \ref{fig:brenom}.

\begin{figure}[h]
	\begin{center}
		\includegraphics[scale=0.6]{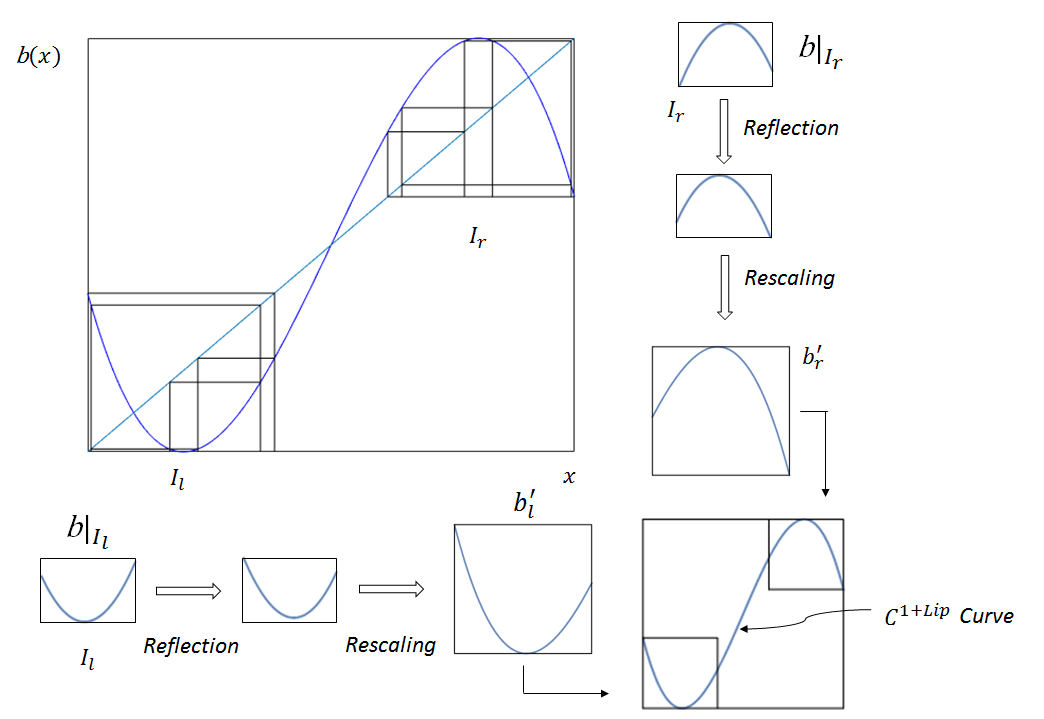}
	\end{center}
\caption{Renormalization of a bimodal map}
\label{fig:brenom}
\end{figure}

In the next section, we construct the renormalization operator defined on the space of piece-wise affine maps which are infinitely renormalizable maps. 

\section{Piece-wise affine renormalizable maps}\label{p2}
A symmetric bimodal map $b: [0,1] \rightarrow [0,1]$ of the form $b(x) = a_3x^3+a_2 x^2+a_1x+a_0,$ for $a_3<0,$ is a $C^1$ map with the following conditions
\begin{itemize}
	\item $b(0)= 1-b(1),$
	\item $b(\frac{1}{2})= \frac{1}{2},$
	\item let $c_l$  and $c_r$ be the two critical points of $b(x)$ , then 
	$b(c_l) = 0$ and $b(c_r)=1.$
\end{itemize} 

\ Let us consider a family of symmetric bimodal maps $B_c : [0, 1] \rightarrow [0, 1]$ which are increasing on the interval between the critical points and decreasing elsewhere. then, we obtained a family of bimodal maps as
\begin{eqnarray}
B_c(x)& = \left\{\begin{array}{ll}
	1-\frac{1 - 6 c + 9 c^2 - 4 c^3 + 6 c x - 6 c^2 x - 3 x^2 + 2 x^3}{(1 - 2 c)^3}, \;\;\; \;  \; \textrm{if} \; c \in \left[0, \frac{1}{4}\right]  \\ 1-\frac{4c^3 - 3 c^2 + 6 c x - 6 c^2 x - 3 x^2 + 2 x^3}{( 2 c-1)^3}, \;\;\;  \; \;\;\;\;\;\;\; \; \textrm{if} \; c \in \left[\frac{3}{4}, 1\right]
\end{array}
\right. \nonumber \\	
& \equiv \left\{\begin{array}{ll}
	b_{c}(x), \;\;\; \;  \; \textrm{if} \; c \in \left[0, \frac{1}{4}\right]  \\ \tilde{b}_{c}(x), \;\;\;  \;  \; \textrm{if} \; c \in \left[\frac{3}{4}, 1\right]
\end{array}
\right. 	\label{eqb}
\end{eqnarray}

\noindent Note that the bimodal maps $b_c$ and $\tilde{b}_c$ are identical maps. 

\noindent Let us define an open set 
$$\Delta^3   = \left\{ (s_0,s_1,s_2) \in \mathbb{R}^3 \; : \;  s_0,s_1,s_2 > 0, \;  \sum\limits_{i = 0}^{2} s_i < 1 \right\}. $$
Each element $(s_0,s_1,s_2)$ of $\Delta^3$ is called a scaling tri-factor. A pair of scaling tri-factors $(s_{0,l},s_{1,l},s_{2,l})$ and $(s_{0,r},s_{1,r},s_{2,r})$ induces two sets of affine maps $(F_{0,l},F_{1,l},F_{2,l})$ and $(F_{0,r},F_{1,r},F_{2,r})$ respectively. For each $i = 0,1,2,$
\begin{eqnarray*}
 F_{i,l}  : I_L  = [0 , b_c(0)] \rightarrow I_L \hspace{1.5cm}\textrm{and}\hspace{1.5cm}   F_{i,r}  : I_R = [ \tilde{b}_c(1), 1] \rightarrow I_R 
\end{eqnarray*} 
 are defined as 
\begin{eqnarray*}
 F_{0,l}(t) = b_c(0) - s_{0,l} \cdot t; \qquad&  F_{0,r}(t) = \tilde{b}_c(1) + s_{0,r} \cdot (1-t) \\ 
 F_{1,l}(t) =  b_c^2(0) - s_{1,l} \cdot t;  & F_{1,r}(t) =  \tilde{b}_c^2(1) + s_{1,r} \cdot (1-t) \\
 F_{2,l}(t) = s_{2,l} \cdot t; & F_{2,r}(t) = 1-s_{2,r} \cdot (1-t).
\end{eqnarray*}

\noindent Note that ${I_L}^{\strut\mathrm{o}} \cap {I_R}^{\strut\mathrm{o}} = \phi,$ for  $c \in [0,\frac{3-\sqrt{3}}{6}].$
\\

The functions $ s_l : \mathbb{N} \rightarrow \Delta^3 $ and $ s_r : \mathbb{N} \rightarrow \Delta^3 $ are said to be a scaling data. We set scaling tri-factors \\ $ s_l(n) = (s_{0,l}(n),s_{1,l}(n), s_{2,l}(n)) \in \Delta^3 $  and $ s_r(n) = (s_{0,r}(n),s_{1,r}(n), s_{2,r}(n)) \in \Delta^3,$\\ so that $s_l(n)$ and $s_r(n)$ induce the triplets of affine maps $ (F_0^l(n)(t) , F_1^l(n)(t) , F_2^l(n)(t))$ and $ (F_0^r(n)(t) , F_1^r(n)(t) , F_2^r(n)(t))$ as described above.\\
For $i=0,1,2,$ let us define the intervals
\begin{eqnarray*}
I_{i,l}^{n} = F_{1,l}(1)\circ F_{1,l}(2)\circ F_{1,l}(3) \circ.....\circ F_{1,l}(n-1)\circ F_{i,l}(n)([0,b_c(0)]).
\end{eqnarray*}
Also,
\begin{eqnarray*}
I_{i,r}^{n} = F_{1,r}(1)\circ F_{1,r}(2)\circ F_{1,r}(3) \circ.....\circ F_{1,r}(n-1)\circ F_{i,r}(n)([\tilde{b}_c(1),1]).
\end{eqnarray*}

\begin{defn}
\ A scaling data $s(n)$ is said to be proper if $$ d(s(n),\partial \Delta^3) \geq \epsilon, \;\;\;\;\;   \textrm{for some} \;\;\; \epsilon > 0.$$ 
\end{defn} 
    
\noindent  A pair of proper scaling data $ s_l : \mathbb{N} \rightarrow \Delta^3 $ and $ s_r : \mathbb{N} \rightarrow \Delta^3 $ induce the sets $ D_{s_l}  = \bigcup\limits_{n \geq 1} (I_{0,l}^{n} \cup I_{2,l}^{n})$ and $ D_{s_r}  = \bigcup\limits_{n \geq 1} (I_{0,r}^{n} \cup I_{2,r}^{n}),$ respectively.   Consider a map $$ f_{s} : D_{s_l}  \cup D_{s_r} \rightarrow [0,1] $$  defined as        $$f_s(x) = \left\{\begin{array}{ll}
f_{s_l}(x), \;\;\; \;  \; \textrm{if} \; x \in D_{s_l}  \\ f_{s_r}(x), \;\;\;  \;  \; \textrm{if} \; x \in D_{s_r}
\end{array}
\right.$$ where $f_{s_l}|_{I_{0,l}^{n}}$ and $f_{s_l}|_{I_{2,l}^{n}}$ are the affine extensions of  $b_c|_{\partial I_{0,l}^{n}}$ and $b_c|_{\partial I_{2,l}^{n}}$ respectively. Similarly, $f_{s_r}|_{I_{0,r}^{n}}$ and $f_{s_r}|_{I_{2,r}^{n}}$ are the affine extensions of  $b_c|_{\partial I_{0,r}^{n}}$ and $b_c|_{\partial I_{2,r}^{n}}$ respectively. These affine extensions are shown in Figure \ref{fig:bibdry}.

\FloatBarrier
\begin{figure}[h]
	\centering
	{\includegraphics [width=100mm]{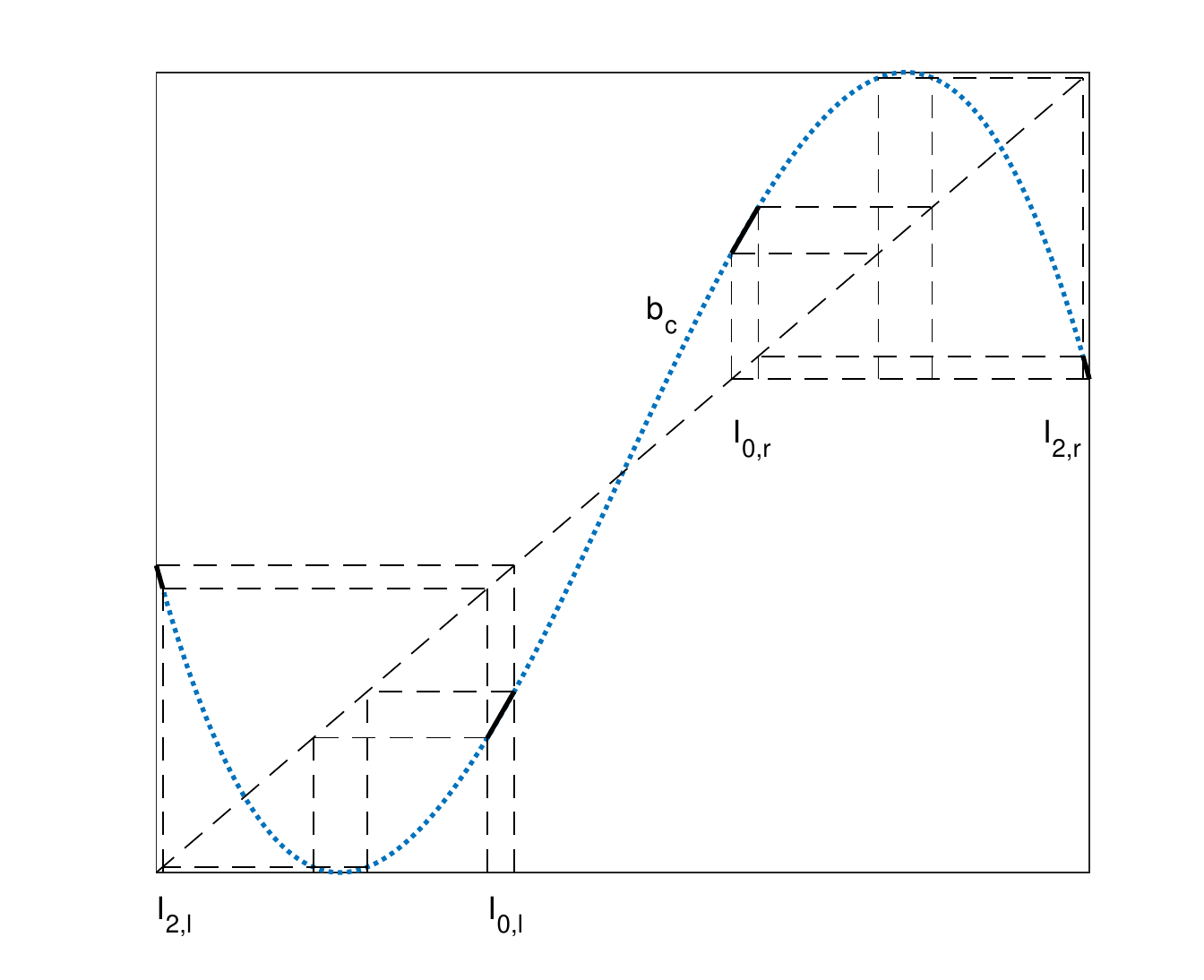}}
	\caption{}
	\label{fig:bibdry}
\end{figure} 
\FloatBarrier

\noindent The end points of the intervals at each level are labeled by
 
\noindent  $$y_0 = 0,\; z_0 = b_c(0),\; I_{1,l}^0 = I_L = [0,b_c(0)] $$ and for $ n \geq 1$ 
\begin{center}
$ x_n =  \partial I_{0,l}^n \backslash \partial I_{1,l}^{n-1} $\\
$ y_{2n-1} = max\{ \partial I_{1,l}^{2n-1} \} $\\
$ y_{2n} = min\{ \partial I_{1,l}^{2n} \} $\\
$ z_{2n-1} = min\{ \partial I_{1,l}^{2n-1} \} $\\
$ z_{2n} = max\{ \partial I_{1,l}^{2n} \} $\\
$ w_n =  \partial I_{2,l}^n \backslash \partial I_{1,l}^{n-1}. $
\end{center}

These points are illustrated in Figure~\ref{fig:nexgen}.
\FloatBarrier
\begin{figure}[h]
	\centering
	{\includegraphics [width=100mm]{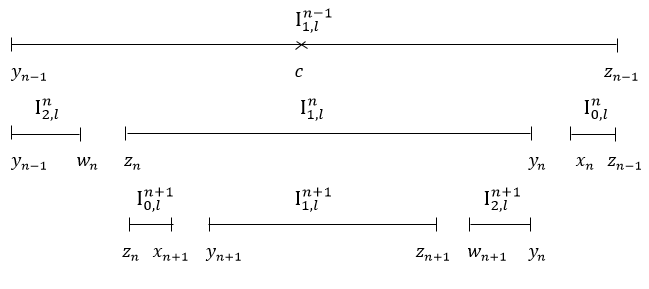}}
	\caption{Intervals of next generations}
	\label{fig:nexgen}
\end{figure} 
\FloatBarrier
\noindent Also, the end points of the intervals at each level are labeled by
 
\noindent  $$z'_0 = b _c(1), \; y'_0 = 1, \; I_{1,r}^0 = I_R = [\tilde{b}_c(1),1] $$ and for $ n \geq 1$ 
\begin{center}
$ x'_n =  \partial I_{0,r}^n \backslash \partial I_{1,r}^{n-1} $\\
$ y'_{2n-1} = min\{ \partial I_{1,r}^{2n-1} \} $\\
$ y'_{2n} = max\{ \partial I_{1,r}^{2n} \} $\\
$ z'_{2n-1} = max\{ \partial I_{1,r}^{2n-1} \} $\\
$ z'_{2n} = min\{ \partial I_{1,r}^{2n} \} $\\
$ w'_n =  \partial I_{2,r}^n \backslash \partial I_{1,r}^{n-1}. $
\end{center}

These points are illustrated in Figure~\ref{fig:nexgenr}.
\FloatBarrier
\begin{figure}[h]
	\centering
	{\includegraphics [width=100mm]{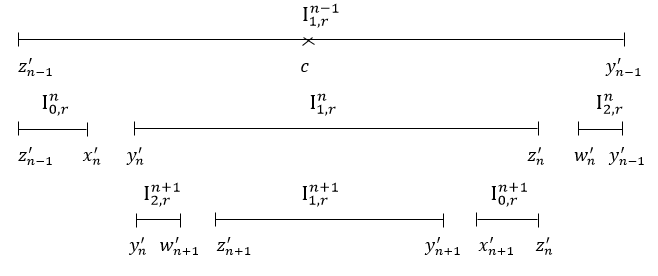}}
	\caption{Intervals of next generations}
	\label{fig:nexgenr}
\end{figure} 
\FloatBarrier

\begin{defn}
\ For a given pair of proper scaling data $s_l, s_r : \mathbb{N} \rightarrow \Delta^3, $ a map $f_{s}$ is said to be \textit{infinitely renormalizable} if for $n \geq 1, $ 
\begin{itemize}
\item[(1)]  $[0,f_{s_l}(y_{n})]$ is the maximal domain containing $0$ on which $f_{s_l}^{3^n-1}$ is defined affinely, $[f_{s_l}^2(y_{n}), f_{s_l}(0)]$ is the maximal domain containing $f_{s_l}(0)$ on which $f_{s_l}^{3^n-2}$ is defined affinely,
$[f_{s_r}(y'_{n}),1]$ is the maximal domain containing $1$ on which $f_{s_r}^{3^n-1}$ is defined affinely and $[ f_{s_r}(1),f_{s_r}^2(y'_{n})]$ is the maximal domain containing $f_{s_r}(1)$ on which $f_{s_r}^{3^n-2}$ is defined affinely,
\item[(2)] $f_{s_l}^{3^n-1}([0,f_{s_l}(y_{n})]) = I_{1,l}^n,$ \\ $f_{s_l}^{3^n-2}([f_{s_l}^2(y_{n}), f_{s_l}(0)]) = I_{1,l}^n $ \\  $f_{s_r}^{3^n-1}([f_{s_r}(y'_{n}),1]) = I_{1,r}^n,$ \\ $f_{s_r}^{3^n-2}([f_{s_r}(1),f_{s_r}^2(y'_{n})]) = I_{1,r}^n.$
\end{itemize}
\end{defn}

\noindent The combinatorics for renormalization of $f_{s_l}$ and $f_{s_r}$ are shown in the following Figures \ref{fig:ptr} and \ref{fig:ptr1}.

\begin{figure}[!htb]
 \centering
  \begin{subfigure}[b]{0.4\linewidth}
    \centering\includegraphics[width=170pt]{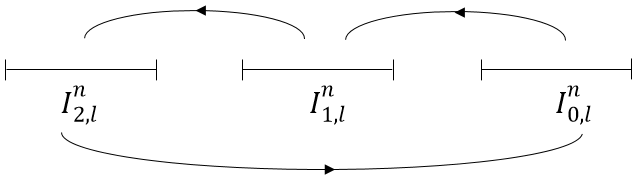}
    \caption{\label{fig:ptr}}
  \end{subfigure}%
  \begin{subfigure}[b]{0.4\linewidth}
    \centering\includegraphics[width=170pt]{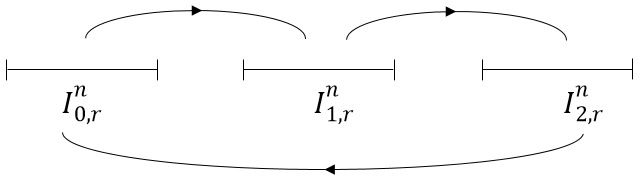}
    \caption{\label{fig:ptr1}}
  \end{subfigure} 
  \caption{The combinatorics: (\subref{fig:ptr}) corresponding to $f_{s_l},$ ($ I_{1,l}^n \rightarrow I_{2,l}^n \rightarrow I_{0,l}^n \rightarrow I_{1,l}^n)$ and  (\subref{fig:ptr1}) corresponding to $f_{s_r},$  ($ I_{1,r}^n \rightarrow I_{2,r}^n \rightarrow I_{0,r}^n \rightarrow I_{1,r}^n).$  }
\end{figure}
 
\subsection{Renormalization on $I_L = [0,b_c(0)]$}\label{p21}
 \noindent Let $f_{s_l} \in \mathcal{U}_\infty$ be given by the proper scaling data $s_l : \mathbb{N} \rightarrow \Delta^3 $ and define $$ {\tilde{I}}_{1,l}^n  = [max\{b_c^{-1}( z_{n})\},b_c(0)] = [ max\{f_{s_l}^{-1}( z_{n})\},f_{s_l}(0)] ,$$  where $ b_c^{-1}(x)$ denotes the preimage(s) of $x$ under $b_c$ and $$ {\hat{I}}_{1,l}^n  = [0,b_c(y_{n})] = [0,f_{s,l}(y_{n})].$$  Let $$ h_{s_l,n} : [0,b_c(0)] \rightarrow [0,b_c(0)] $$ be defined by $$h_{s_l,n} = F_{1,l}(1)\circ F_{1,l}(2)\circ F_{1,l}(3) \circ.....\circ F_{1,l}(n)$$
Furthermore, let $${\tilde{h}}_{s_l,n} : [0,b_c(0)] \rightarrow {\tilde{I}}_{1,l}^n \;\; \textrm{and} \;\; {\hat{h}}_{s_l,n} : [0,b_c(0)] \rightarrow {\hat{I}}_{1,l}^n $$ be the affine orientation preserving homeomorphisms. Then define $$R_n^lf_s: h_{s_l,n}^{-1}(D_{s_l}) \rightarrow [0,b_c(0)]$$  by $$R_n^lf_{s_l}(x) =  \left\{\begin{array}{ll}
R_n^{l-}f_{s_l}(x), \;\;\; \;  \; \textrm{if} \; x \in h_{s_l,n}^{-1}(I_{0,l}^n) \\ R_n^{l+}f_{s_l}(x), \;\;\;  \;  \; \textrm{if} \; x \in h_{s_l,n}^{-1}(I_{2,l}^n)
\end{array}
\right.$$ \\
where, $$R_n^{l-}f_{s_l}: h_{s_l,n}^{-1}(\mathop{\cup}\limits_{n \geq 1}I_{0,l}^n) \rightarrow [0,b_c(0)] $$\;\; and $$ R_n^{l+}f_{s_l}: h_{s_l,n}^{-1}(\mathop{\cup}\limits_{n \geq 1}I_{2,l}^n) \rightarrow [0,b_c(0)]$$ are defined by $$R_n^{l-}f_{s_l}(x) = {\tilde{h}}_{s_l,n}^{-1} \circ f_{s_l}^{-1} \circ h_{s_l,n}(x)$$ $$R_n^{l+}f_{s_l}(x) = {\hat{h}}_{s_l,n}^{-1} \circ f_{s_l} \circ h_{s_l,n}(x), $$ which are illustrated in Figure~\ref{fig:renomop}.

\begin{figure}[!htb]
\centering
{\includegraphics [width=120mm]{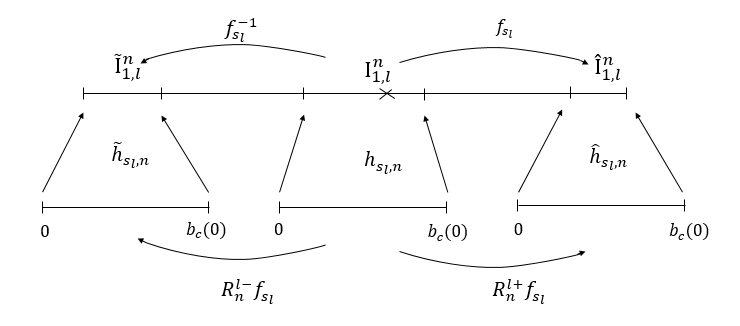}}
\caption{ }
 \label{fig:renomop}
\end{figure}

\noindent Let $\sigma: (\Delta^3)^{\mathbb{N}} \rightarrow (\Delta^3)^{\mathbb{N}}$ be the shift map defined as $ \sigma(s_l^1s_l^2s_l^3s_l^4....) = (s_l^2s_l^3s_l^4....), $ where $s_l^i \in \Delta^3$ for all $i \in \mathbb{N}.$

\begin{Lem}\label{lem1}
\ Let $s_l : \mathbb{N} \rightarrow \Delta^3$ be proper scaling data such that $f_{s_l}$ is infinitely renormalizable. Then $$R_n^l{f_{s_l}} = f_{\sigma^{n}(s_l)}. $$
\end{Lem}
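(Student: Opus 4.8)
The plan is to unwind the definition of $R_n^l f_{s_l}$ and show that its two affine branches, together with their domains of definition, coincide exactly with the corresponding data of the infinitely renormalizable map $f_{\sigma^n(s_l)}$ built from the shifted scaling data. The key observation is that $R_n^l f_{s_l}$ is assembled from three ingredients: the conjugating homeomorphism $h_{s_l,n} = F_{1,l}(1)\circ\cdots\circ F_{1,l}(n)$, and the two rescalings $\tilde h_{s_l,n}$ and $\hat h_{s_l,n}$ onto $\tilde I_{1,l}^n$ and $\hat I_{1,l}^n$. First I would record that $h_{s_l,n}$ maps $[0,b_c(0)]$ affinely onto $I_{1,l}^n$, and, more importantly, that for every $m\ge 1$ it maps the interval $I_{i,l}^{m}$ built from $\sigma^n(s_l)$ onto $I_{i,l}^{\,n+m}$ built from $s_l$ — this is immediate from the defining formula $I_{i,l}^{m}=F_{1,l}(1)\circ\cdots\circ F_{1,l}(m-1)\circ F_{i,l}(m)([0,b_c(0)])$ once the indices of the affine maps are shifted by $n$. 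Consequently $h_{s_l,n}^{-1}(D_{s_l})$ is exactly the set $D_{\sigma^n(s_l)}$ on which $f_{\sigma^n(s_l)}$ is defined, so the domains match.

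Next I would treat the two branches separately. On $h_{s_l,n}^{-1}(I_{0,l}^{\,m})$, by definition $R_n^{l-}f_{s_l} = \tilde h_{s_l,n}^{-1}\circ f_{s_l}^{-1}\circ h_{s_l,n}$. Here one uses the combinatorics $I_{1,l}^n\to I_{2,l}^n\to I_{0,l}^n\to I_{1,l}^n$ recorded in Figure~\ref{fig:ptr}: the relevant branch of $f_{s_l}^{-1}$ carries $I_{0,l}^{\,n+m}$ into $\tilde I_{1,l}^n$, and $\tilde h_{s_l,n}^{-1}$ rescales $\tilde I_{1,l}^n$ back to $[0,b_c(0)]$. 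Composing affine maps, one checks that the resulting affine map on $I_{0,l}^{\,m}$ (coordinates of $\sigma^n(s_l)$) has exactly the slope and endpoints dictated by $f_{\sigma^n(s_l)}|_{I_{0,l}^{m}}$, i.e. it is the affine extension of $b_c$ on $\partial I_{0,l}^{m}$ of the shifted system. The same computation with $R_n^{l+}f_{s_l} = \hat h_{s_l,n}^{-1}\circ f_{s_l}\circ h_{s_l,n}$ on $h_{s_l,n}^{-1}(I_{2,l}^{\,m})$, using that $f_{s_l}$ maps $I_{2,l}^{\,n+m}$ onto $\hat I_{1,l}^n$ and that $\hat h_{s_l,n}^{-1}$ rescales $\hat I_{1,l}^n$ to $[0,b_c(0)]$, identifies the second branch with $f_{\sigma^n(s_l)}|_{I_{2,l}^{m}}$. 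Since a piece-wise affine map is determined by its domain and its value on the boundary of each affine piece, matching these data on every level $m$ gives $R_n^l f_{s_l} = f_{\sigma^n(s_l)}$.

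The main obstacle I expect is bookkeeping rather than conceptual: one must verify that the orientation (the alternation between $\min$/$\max$ endpoints $y_{2n-1},y_{2n},z_{2n-1},z_{2n}$ for odd versus even levels) is handled correctly, so that $f_{s_l}^{-1}$ in the definition of $R_n^{l-}$ picks the correct monotone branch and that $\tilde h_{s_l,n}$, $\hat h_{s_l,n}$ are indeed the orientation-preserving affine homeomorphisms onto the claimed target intervals. It is also necessary to confirm that the "maximal domain on which $f_{s_l}^{3^n-1}$ is defined affinely" appearing in the definition of infinite renormalizability transports under $h_{s_l,n}$ to the analogous maximal domain for $f_{\sigma^n(s_l)}$; this again follows from the shift-by-$n$ identity for the intervals $I_{i,l}^{m}$ but must be stated carefully. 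Once the index shift $I_{i,l}^{m}(\sigma^n s_l)=h_{s_l,n}(I_{i,l}^{m+ n \text{-shift}})$ is set up cleanly, the rest is a routine composition of affine maps, and the identity $R_n^l f_{s_l}=f_{\sigma^n(s_l)}$ drops out. An entirely analogous argument establishes the companion statement $R_n^r f_{s_r}=f_{\sigma^n(s_r)}$ on $I_R$, which I would remark on without repeating the computation.
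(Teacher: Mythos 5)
First, a point of reference: the paper states this lemma with no proof at all, so there is nothing of the authors' to compare your argument against; what follows is an assessment of your proposal on its own terms. The first half of your plan is correct and is the easy half: from $I_{i,l}^{n+m}=h_{s_l,n}\circ F_{1,l}(n+1)\circ\cdots\circ F_{1,l}(n+m-1)\circ F_{i,l}(n+m)([0,b_c(0)])$ one gets $h_{s_l,n}(I_{i,l}^{m}(\sigma^{n}s_l))=I_{i,l}^{n+m}(s_l)$ and hence $h_{s_l,n}^{-1}(D_{s_l}\cap I_{1,l}^{n})=D_{\sigma^{n}(s_l)}$, so the domains do match.

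The gap is in the second half, which you compress into ``one checks that the resulting affine map has exactly the slope and endpoints dictated by $f_{\sigma^{n}(s_l)}$.'' That check is the entire content of the lemma, and for the branch $R_n^{l-}$ it does not go through as you describe. Take $n=1$ and the endpoint $x=b_c(0)=h_{s_l,1}^{-1}(z_1)$ of $h_{s_l,1}^{-1}(I_{0,l}^{2})=I_{0,l}^{1}(\sigma s_l)$. Then $h_{s_l,1}(x)=z_1=b_c^5(0)$; the unique preimage of $z_1$ in $\tilde I_{1,l}^{1}=[b_c^4(0),b_c(0)]=I_{0,l}^{1}$ is $b_c^4(0)$; and since $\tilde h_{s_l,1}$ is orientation preserving it sends $0$ to $b_c^4(0)$, so the formula returns $R_1^{l-}f_{s_l}(b_c(0))=\tilde h_{s_l,1}^{-1}(b_c^4(0))=0$. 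But $f_{\sigma(s_l)}$ must send the corresponding endpoint of its interval $I_{0,l}^{1}$ to an endpoint of its interval $I_{1,l}^{1}$; at the fixed point this value is $f_{s^*}(b_c(0))=b_c^2(0)=y_1\neq 0$. So the endpoints do not match, and the assertion you defer is exactly where the argument breaks. A second, related problem: your claim that ``the relevant branch of $f_{s_l}^{-1}$ carries $I_{0,l}^{n+m}$ into $\tilde I_{1,l}^{n}$'' fails for $n\ge 2$, since $f_{s_l}(\tilde I_{1,l}^{n})=[f_{s_l}^{3}(y_n),\,b_c^2(0)]$ is a small interval adjacent to $y_1$, disjoint from $I_{0,l}^{n+m}\subset I_{1,l}^{n}$, so $f_{s_l}^{-1}(I_{0,l}^{n+m})\cap\tilde I_{1,l}^{n}=\emptyset$ and the composite $\tilde h_{s_l,n}^{-1}\circ f_{s_l}^{-1}\circ h_{s_l,n}$ is not even defined there.

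What a correct proof has to establish, and where the hypothesis of infinite renormalizability is actually consumed, is the identification of the long affine iterates with the rescalings: conditions (1)--(2) give that $f_{s_l}^{3^{n}-1}\colon\hat I_{1,l}^{n}\to I_{1,l}^{n}$ is an affine homeomorphism, and one must check it has the same orientation as $h_{s_l,n}\circ\hat h_{s_l,n}^{-1}$ and hence equals it; since $f_{s_l}(I_{1,l}^{n}\cap D_{s_l})\subset\hat I_{1,l}^{n}$, this yields $h_{s_l,n}^{-1}\circ f_{s_l}^{3^{n}}\circ h_{s_l,n}=\hat h_{s_l,n}^{-1}\circ f_{s_l}\circ h_{s_l,n}$ on all of $D_{\sigma^{n}(s_l)}$, and the endpoint check then succeeds on both the $I_{0}$ and $I_{2}$ pieces. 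In other words, either the lemma is proved with this single forward formula (which forces a correction or reinterpretation of the paper's two-branch definition of $R_n^{l}$), or the $R_n^{l-}$ branch needs an additional identity relating $\tilde h_{s_l,n}$, $f_{s_l}^{3^{n}-2}$ and the orientation $(-1)^{n}$ of $h_{s_l,n}$ that neither you nor the paper supplies. Either way, the step you wave through as routine bookkeeping is the substance of the lemma, and as written it fails.
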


\noindent Let $f_{s_l}$ be infinitely renormalization, then for $n \geq 0,$ we have $$ f_{s_l}^{3^n}: D_{s_l} \cap I_{1,l}^n \rightarrow I_{1,l}^n $$ is well defined. \\
\ Define the renormalization $R^l : \mathcal{U}_\infty \rightarrow \mathcal{U}_\infty$ by
$$ R^lf_{s_l} = h_{s_l,1}^{-1} \circ f_{s_l}^3 \circ h_{s_l,1} .$$

\noindent The maps $f_{s_l}^{3^n-2} : {\tilde{I}}_{1,l}^n \rightarrow I_{1,l}^n $ and $f_{s_l}^{3^n-1} : {\hat{I}}_{1,l}^n \rightarrow I_{1,l}^n $ are the affine homeomorphisms whenever $f_{s_l} \in \mathcal{U}_\infty$. Then

\begin{Lem}\label{lem2}
\ We have $(R^l)^n{f_{s_l}} : D_{\sigma^{n}(s_l)} \rightarrow [0,b_c(0)]$ and $(R^l)^n{f_{s_l}} = R_n^l{f_{s_l}}.$
\end{Lem}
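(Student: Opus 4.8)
The plan is to reduce the whole statement to the single ``one--step'' identity
\[
R^{l}f_{s_{l}}=R^{l}_{1}f_{s_{l}}=f_{\sigma(s_{l})},
\]
valid for every proper scaling data $s_{l}$ with $f_{s_{l}}\in\mathcal{U}_{\infty}$ (the last equality being Lemma~\ref{lem1} with $n=1$), and then to iterate it. Note that Lemma~\ref{lem1}, read as an equality of maps, already records that the domain of $R^{l}_{n}f_{s_{l}}$ is $D_{\sigma^{n}(s_{l})}$; so it suffices to prove $(R^{l})^{n}f_{s_{l}}=f_{\sigma^{n}(s_{l})}$ for all $n\ge 0$.

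First I would settle the base case $n=1$, namely $R^{l}f_{s_{l}}=R^{l}_{1}f_{s_{l}}$. For the domains: since $h_{s_{l},1}=F_{1,l}(1)$ maps $[0,b_{c}(0)]$ onto $I^{1}_{1,l}$ and, for $m\ge 2$, carries the level-$(m-1)$ intervals built from $\sigma(s_{l})$ onto the level-$m$ intervals built from $s_{l}$, while $I^{1}_{0,l}$ and $I^{1}_{2,l}$ lie outside $I^{1}_{1,l}$ (Figure~\ref{fig:nexgen}), one gets $h_{s_{l},1}^{-1}(D_{s_{l}})=D_{\sigma(s_{l})}$. For the formula I would split this set into its ``$I_{0}$-part'' and its ``$I_{2}$-part'' and, on each, rewrite $h_{s_{l},1}^{-1}\circ f_{s_{l}}^{3}\circ h_{s_{l},1}$ by peeling off iterates along the combinatorial cycle $I^{\bullet}_{1,l}\to I^{\bullet}_{2,l}\to I^{\bullet}_{0,l}\to I^{\bullet}_{1,l}$ and using that $f_{s_{l}}^{3^{1}-2}=f_{s_{l}}\colon\tilde{I}^{1}_{1,l}\to I^{1}_{1,l}$ and $f_{s_{l}}^{3^{1}-1}=f_{s_{l}}^{2}\colon\hat{I}^{1}_{1,l}\to I^{1}_{1,l}$ are affine homeomorphisms (the fact stated just before the lemma). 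Since the leftover factors $\tilde{h}_{s_{l},1},\hat{h}_{s_{l},1},h_{s_{l},1}$ are affine and a monotone affine homeomorphism between a prescribed pair of intervals is determined by its orientation alone, the check on each affine piece reduces to matching endpoints and orientations; doing so — the single orientation-reversing factor landing on the $I_{0}$-part and producing the $f_{s_{l}}^{-1}$ of $R^{l-}_{1}$ — shows that $h_{s_{l},1}^{-1}\circ f_{s_{l}}^{3}\circ h_{s_{l},1}$ coincides with $\tilde{h}_{s_{l},1}^{-1}\circ f_{s_{l}}^{-1}\circ h_{s_{l},1}=R^{l-}_{1}f_{s_{l}}$ on the $I_{0}$-part and with $\hat{h}_{s_{l},1}^{-1}\circ f_{s_{l}}\circ h_{s_{l},1}=R^{l+}_{1}f_{s_{l}}$ on the $I_{2}$-part. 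Hence $R^{l}f_{s_{l}}=R^{l}_{1}f_{s_{l}}=f_{\sigma(s_{l})}$ on $D_{\sigma(s_{l})}$.

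The induction on $n$ is then formal. Suppose $(R^{l})^{n}f_{s_{l}}=f_{\sigma^{n}(s_{l})}$ with domain $D_{\sigma^{n}(s_{l})}$. The shifted data $\sigma^{n}(s_{l})$ is again proper, because $d\bigl(\sigma^{n}(s_{l})(m),\partial\Delta^{3}\bigr)=d\bigl(s_{l}(m+n),\partial\Delta^{3}\bigr)\ge\epsilon$, and $f_{\sigma^{n}(s_{l})}=R^{l}_{n}f_{s_{l}}\in\mathcal{U}_{\infty}$ is infinitely renormalizable — its level-$m$ renormalization data is the level-$(n+m)$ data of $f_{s_{l}}$ — so the base case applies with $s_{l}$ replaced by $\sigma^{n}(s_{l})$. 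Therefore
\[
(R^{l})^{n+1}f_{s_{l}}=R^{l}\bigl((R^{l})^{n}f_{s_{l}}\bigr)=R^{l}f_{\sigma^{n}(s_{l})}=f_{\sigma^{n+1}(s_{l})}=R^{l}_{n+1}f_{s_{l}},
\]
the last step by Lemma~\ref{lem1}, and its domain is $D_{\sigma^{n+1}(s_{l})}$. (Equivalently, one could first prove $(R^{l})^{n}f_{s_{l}}=h_{s_{l},n}^{-1}\circ f_{s_{l}}^{3^{n}}\circ h_{s_{l},n}$ from $h_{s_{l},1}\circ h_{\sigma(s_{l}),1}\circ\cdots\circ h_{\sigma^{n-1}(s_{l}),1}=h_{s_{l},n}$ and compare directly with $R^{l}_{n}f_{s_{l}}$, but that still rests on the same base-case computation.) This completes the induction.

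The genuine difficulty is the base case: reconciling the two descriptions of the first renormalization — the conjugated third iterate $h_{s_{l},1}^{-1}\circ f_{s_{l}}^{3}\circ h_{s_{l},1}$ versus the piecewise formula built from $\tilde{h}_{s_{l},1},\hat{h}_{s_{l},1}$ and one branch of $f_{s_{l}}^{\pm 1}$. That is where the combinatorial cycle, the affine-homeomorphism statements for $f_{s_{l}}^{3^{n}-2}$ and $f_{s_{l}}^{3^{n}-1}$, the parity-dependent labelling of the endpoints $x_{n},y_{n},z_{n},w_{n}$, and the reflection/orientation conventions all have to be handled together; everything afterwards is the bookkeeping above.
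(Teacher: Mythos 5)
The paper states Lemma~\ref{lem2} (like Lemma~\ref{lem1}) with no proof at all, so there is no argument of the authors' to compare yours against; I can only assess the proposal on its own terms. Your skeleton is the right one: reduce to the one-step identity $R^{l}f_{s_{l}}=R^{l}_{1}f_{s_{l}}=f_{\sigma(s_{l})}$, check that $\sigma^{n}(s_{l})$ is again proper and $f_{\sigma^{n}(s_{l})}$ again infinitely renormalizable, and induct; the parenthetical route via $h_{s_{l},1}\circ h_{\sigma(s_{l}),1}\circ\cdots\circ h_{\sigma^{n-1}(s_{l}),1}=h_{s_{l},n}$, giving $(R^{l})^{n}f_{s_{l}}=h_{s_{l},n}^{-1}\circ f_{s_{l}}^{3^{n}}\circ h_{s_{l},n}$, is the cleanest formalization of the induction.

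The gap is in the base case, exactly where you locate the difficulty, and the resolution you assert there is not correct. Under the paper's combinatorics $I_{1,l}^{1}\to I_{2,l}^{1}\to I_{0,l}^{1}\to I_{1,l}^{1}$, the first application of $f_{s_{l}}$ sends \emph{all} of $D_{s_{l}}\cap I_{1,l}^{1}$ --- the $I_{0}$-part as well as the $I_{2}$-part --- into $\hat{I}_{1,l}^{1}=I_{2,l}^{1}$, and the remaining two iterates are the affine homeomorphism $f_{s_{l}}^{2}\colon\hat{I}_{1,l}^{1}\to I_{1,l}^{1}$; the orientation count ($\hat{h}_{s_{l},1}$ preserving, $f_{s_{l}}^{2}|_{\hat{I}_{1,l}^{1}}$ reversing, $h_{s_{l},1}$ reversing) then yields $h_{s_{l},1}^{-1}\circ f_{s_{l}}^{3}\circ h_{s_{l},1}=\hat{h}_{s_{l},1}^{-1}\circ f_{s_{l}}\circ h_{s_{l},1}$ on the \emph{whole} domain. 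It does not reduce to $\tilde{h}_{s_{l},1}^{-1}\circ f_{s_{l}}^{-1}\circ h_{s_{l},1}$ on the $I_{0}$-part: that expression, with $f_{s_{l}}^{-1}$ the inverse branch through $\tilde{I}_{1,l}^{1}=I_{0,l}^{1}$, is a single globally affine map of $[0,b_{c}(0)]$ (indeed $x\mapsto b_{c}(0)-x$, by the very orientation argument you invoke), whereas the conjugated return map restricted to $\bigcup_{m\ge 2}h_{s_{l},1}^{-1}(I_{0,l}^{m})$ consists of infinitely many distinct affine pieces. Concretely, at $z_{1}=b_{c}^{5}(0)\in\partial I_{0,l}^{2}$ one gets $\hat{h}_{s_{l},1}^{-1}(f_{s_{l}}(z_{1}))=b_{c}(0)\,b_{c}^{6}(0)/b_{c}^{3}(0)>0$ while $\tilde{h}_{s_{l},1}^{-1}(f_{s_{l}}^{-1}(z_{1}))=\tilde{h}_{s_{l},1}^{-1}(b_{c}^{4}(0))=0$. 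So the ``match endpoints and orientations'' device cannot close the $I_{0}$-case: the two maps you are comparing there are not both affine bijections between a common pair of intervals. Either the paper's displayed formula for $R_{n}^{l-}f_{s_{l}}$ has to be corrected (to $\hat{h}_{s_{l},n}^{-1}\circ f_{s_{l}}\circ h_{s_{l},n}$, which makes the case split unnecessary and is what both Lemma~\ref{lem1} and Lemma~\ref{lem2} actually require) or your base case has to be reworked around whatever the intended definition is; as written, the central identity $R^{l}f_{s_{l}}=R^{l}_{1}f_{s_{l}}$ is asserted rather than proved, and the specific mechanism you offer for it fails.
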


\noindent The~\cref{lem1} and~\cref{lem2}  give the following result.

\begin{prop}\label{pl}
\ There exists a map $f_{s_l^*} \in  \mathcal{U}_\infty,$ where $s_l^*$ is characterized by $$R^lf_{s_l^*} = f_{s_l^*}. $$ 
\end{prop}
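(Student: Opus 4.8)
The plan is to show that, read off on scaling data, the operator $R^l$ is nothing but the shift $\sigma$, and then to exhibit a $\sigma$-fixed scaling data that lands in $\mathcal{U}_\infty$.

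\medskip
\noindent\emph{Step 1: reduce to a shift-fixed scaling data.} Combining \cref{lem1} and \cref{lem2} gives, for every $n\geq 1$ and every proper scaling data $s_l$ with $f_{s_l}\in\mathcal{U}_\infty$,
$(R^l)^n f_{s_l}=R_n^l f_{s_l}=f_{\sigma^n(s_l)}$, and in particular $R^l f_{s_l}=f_{\sigma(s_l)}$. Hence it suffices to find a proper scaling data $s_l^*$ with $\sigma(s_l^*)=s_l^*$ and $f_{s_l^*}\in\mathcal{U}_\infty$. The natural $\sigma$-fixed data are the constant sequences $s_l^*(n)\equiv s^*$ for a fixed tri-factor $s^*=(s_0^*,s_1^*,s_2^*)\in\Delta^3$; for such a sequence $d(s_l^*(n),\partial\Delta^3)=d(s^*,\partial\Delta^3)$, so $s_l^*$ is automatically proper as soon as $s^*$ is chosen in the interior of $\Delta^3$.

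\medskip
\noindent\emph{Step 2: a constant proper data gives a well-defined, shrinking tower.} Since $s^*$ is interior we have $s_1^*<1$ bounded away from $1$, so $h_{s_l^*,n}=F_{1,l}(1)\circ\cdots\circ F_{1,l}(n)$ contracts $I_L=[0,b_c(0)]$ by the factor $(s_1^*)^n\to 0$; therefore $|I_{1,l}^n|\to 0$, the nested intervals $I_{i,l}^n$ ($i=0,1,2$) close down on the critical point, and the set $D_{s_l^*}$ and the map $f_{s_l^*}$ are well defined. It then remains to check the combinatorial conditions (1)--(2) in the definition of infinitely renormalizable. Because the data is constant, the configuration of level-$(n+1)$ intervals inside $I_{1,l}^n$ is the affine image under $F_{1,l}$ of the level-$n$ configuration inside $I_L$; consequently conditions (1)--(2) for all $n$ follow from the single case $n=1$, i.e. from the statement that the renormalization combinatorics $I_{1,l}^1\to I_{2,l}^1\to I_{0,l}^1\to I_{1,l}^1$ of Figure~\ref{fig:ptr} closes up for the piece-wise affine map $f_{s_l^*}$, which is a finite system of algebraic relations among $s_0^*,s_1^*,s_2^*$ and the parameter $c\in\bigl[0,\tfrac{3-\sqrt3}{6}\bigr]$ entering the affine extensions of $b_c$ at the partition endpoints.

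\medskip
\noindent\emph{Step 3: the main obstacle — solving the self-consistency system.} The heart of the proof is to show this system is solvable with $s^*$ in the interior of $\Delta^3$: one must verify that the requirements ``$f_{s_l^*}^{\,3^n-2}\colon\tilde I_{1,l}^n\to I_{1,l}^n$ and $f_{s_l^*}^{\,3^n-1}\colon\hat I_{1,l}^n\to I_{1,l}^n$ are affine homeomorphisms with the prescribed maximal domains'' are mutually compatible. I would write out the $n=1$ conditions as continuous functions of $(s^*,c)$ on the closure of a suitable region of $\mathrm{int}\,\Delta^3\times[0,\tfrac{3-\sqrt3}{6}]$, examine the behaviour as $s^*\to\partial\Delta^3$, and invoke an intermediate-value/degree argument — or, equivalently, realize the renormalization of the affine family as a self-map of the tri-factor space and apply a contraction/fixed-point argument — to produce a solution $s^*$ with $d(s^*,\partial\Delta^3)>0$. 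Setting $s_l^*(n)\equiv s^*$ then yields $f_{s_l^*}\in\mathcal{U}_\infty$ with $R^l f_{s_l^*}=f_{\sigma(s_l^*)}=f_{s_l^*}$, and among constant scaling data this fixed-point equation pins down $s^*$ as the unique solution of the system, so $s_l^*$ is characterized by $R^l f_{s_l^*}=f_{s_l^*}$, as claimed.
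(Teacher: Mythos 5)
Your Steps 1--2 match the paper's strategy: by \cref{lem1} the operator $R^l$ acts on scaling data as the shift, so it suffices to produce a constant proper scaling data whose piece-wise affine map is infinitely renormalizable, and self-similarity reduces the combinatorics to level $n=1$. The problem is Step 3, which you correctly identify as ``the heart of the proof'' but then do not carry out: you describe two possible strategies (an intermediate-value/degree argument, or a contraction argument on tri-factor space) without setting either one up, so the existence of the self-consistent $s^*$ is never actually established. The paper's key device, which is missing from your proposal, is that the candidate scaling tri-factor is \emph{not} a free point of $\Delta^3$ to be solved for: because the affine pieces must interpolate the cubic $b_c$ at the orbit points of $0$, all three ratios are explicit functions $S_{i,l}(c)$ of the single critical-point parameter $c$, and the renormalization acts on that parameter by an explicit one-dimensional map $\mathcal{R}(c)=\bigl(b_c^2(0)-c\bigr)/S_{1,l}(c)$. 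The whole self-consistency system therefore collapses to the scalar equation $\mathcal{R}(c)=c$ on a feasible domain $F_d^l\subset\bigl(0,\tfrac{3-\sqrt3}{6}\bigr)$ cut out by the positivity of the $S_{i,l}$ and of the gap ratios; the paper computes $F_d^l$ and locates the fixed point $c_l^*=0.196693\ldots$ by explicit (Mathematica) computation.

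Without that one-parameter reduction your proposed arguments do not close. The degree/IVT route requires knowing the sign behaviour of the defect on the boundary of a region you have not identified --- and the paper's own computation shows this is delicate: $F_d^l$ splits into two components, $\mathcal{R}$ has no fixed point in $F_{d_1}^l$ and exactly one in $F_{d_2}^l$, so a soft topological argument on all of $\mathrm{int}\,\Delta^3$ cannot be expected to work without quantitative input. The contraction alternative is worse than unsubstantiated: the paper observes that $\mathcal{R}$ is \emph{expanding} near $c_l^*$ (the fixed point is unstable), so no contraction-mapping argument in this parametrization can produce it. In short, your proposal correctly frames the reduction but leaves the actual existence proof --- the explicit formulas \eqref{eq01}--\eqref{eq04}, the feasible domain \eqref{eqfd}, and the verified crossing of $\mathcal{R}$ with the diagonal --- entirely unexecuted, and one of your two fallback strategies would fail.
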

\begin{proof}
\ Consider $s_l : \mathbb{N} \rightarrow \Delta^3$ be proper scaling data such that $f_{s_l}  $ is an infinitely renormalizable. Let $c_n$ be the critical point of $f_{\sigma^n(s_l)}.$ Then

\FloatBarrier
\begin{figure}[h]
\centering
{\includegraphics [scale=0.65]{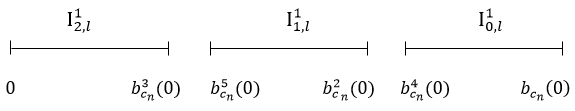}}
\caption{Length of intervals}
\label{fig:ilength}
\end{figure} 
\FloatBarrier

\noindent we have the following scaling ratios which are illustrated in Figure~\ref{fig:ilength}
\begin{eqnarray}
 s_{0,l}(n) &=   \frac{b_{c_n}(0)-b_{c_n}^4(0)}{b_{c_n}(0)} \label{eq01} \\ 
  s_{1,l}(n) &=  \frac{b_{c_n}^2(0)-b_{c_n}^5(0)}{b_{c_n}(0)} \label{eq02} \\
  s_{2,l}(n) &=  \frac{b_{c_n}^3(0)}{b_{c_n}(0)} \label{eq03} \\
 c_{n+1} &= \frac{b_{c_n}^2(0)-c_n}{s_{1,l}(n)}  \equiv \mathcal{R}(c_n).  \label{eq04}
\end{eqnarray}

\noindent Since $(s_{0,l}(n), s_{1,l}(n), s_{2,l}(n)) \in \Delta^3,$ this implies the following conditions 
\begin{eqnarray}
 s_{0,l}(n),\; s_{1,l}(n), \; s_{2,l}(n) &> 0 \label{eq05} \\
 s_{0,l}(n) +s_{1,l}(n) +s_{2,l}(n) & < 1 \label{eq06} 
 \end{eqnarray}

\ As the intervals $I_{i,l}^n,$ for $i = 0,1,2,$ are mutually disjoint, we denote the gap ratios as $g_{0,l}^n$ and $g_{1,l}^n$ which are in between $I_{0,l}^n \;\&\; I_{1,l}^n$ and $I_{1,l}^n \; \& \; I_{2,l}^n$ respectively. The gap ratios are defined as,  \\
for $n \in \mathbb{N},$
\begin{eqnarray}
g_{0,l}^n = \frac{b_{c_n}^4(0)-b_{c_n}^2(0)}{b_{c_n}(0)} \equiv G_{0,l}(c_n) >0  \label{g1} \\
g_{1,l}^n = \frac{b_{c_n}^5(0)-b_{c_n}^3(0)}{b_{c_n}(0)} \equiv G_{1,l}(c_n) > 0 \label{g2} \\
0 < c_n < \frac{3-\sqrt{3}}{6} \label{eq6a}
\end{eqnarray}

\noindent We use Mathematica for solving the equations~(\ref{eq01}),~(\ref{eq02}) and~(\ref{eq03}), then we get the expressions for $s_{0,l}(n),\;s_{1,l}(n)$ and $\;s_{2,l}(n).$ \\ Let $s_{i,l}(n) \equiv S_{i,l}(c_n)$ for $i = 0,1,2.$  The graphs of $S_{i,l}(c)$ are shown in Figures ~\ref{fig:fig1}, ~\ref{fig:fig2} and~\ref{fig:fig3}. 

\begin{figure}[ht]
  \centering
  \begin{subfigure}[b]{0.4\linewidth}
    \centering\includegraphics[width=135pt]{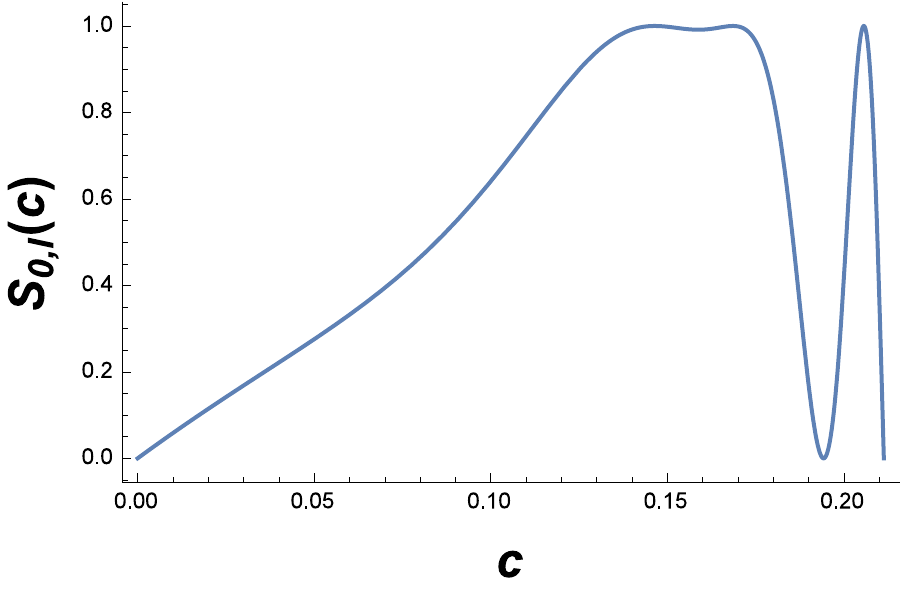}
    \caption{\label{fig:fig1}}
  \end{subfigure}%
  \begin{subfigure}[b]{0.4\linewidth}
    \centering\includegraphics[width=135pt]{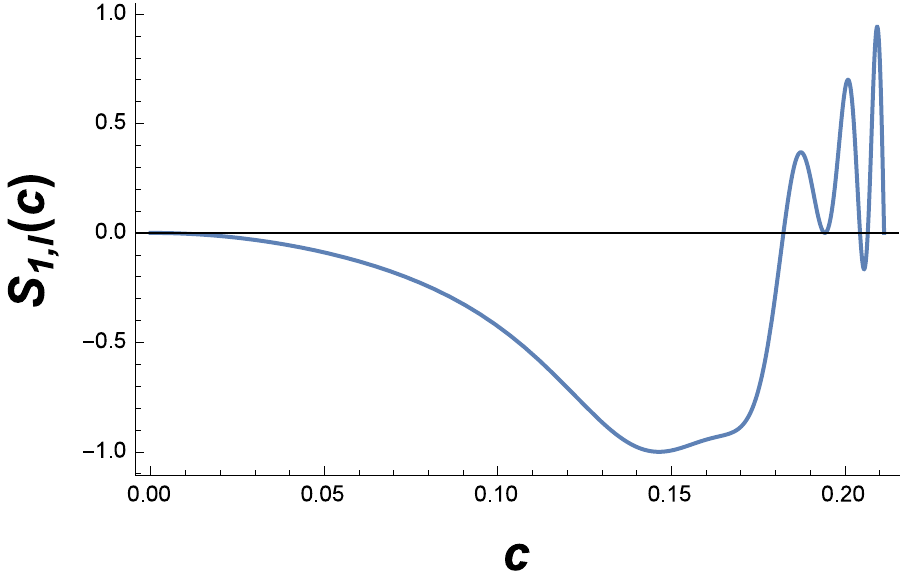}
    \caption{\label{fig:fig2}}
  \end{subfigure}
  \begin{subfigure}[b]{0.4\linewidth}
    \centering\includegraphics[width=135pt]{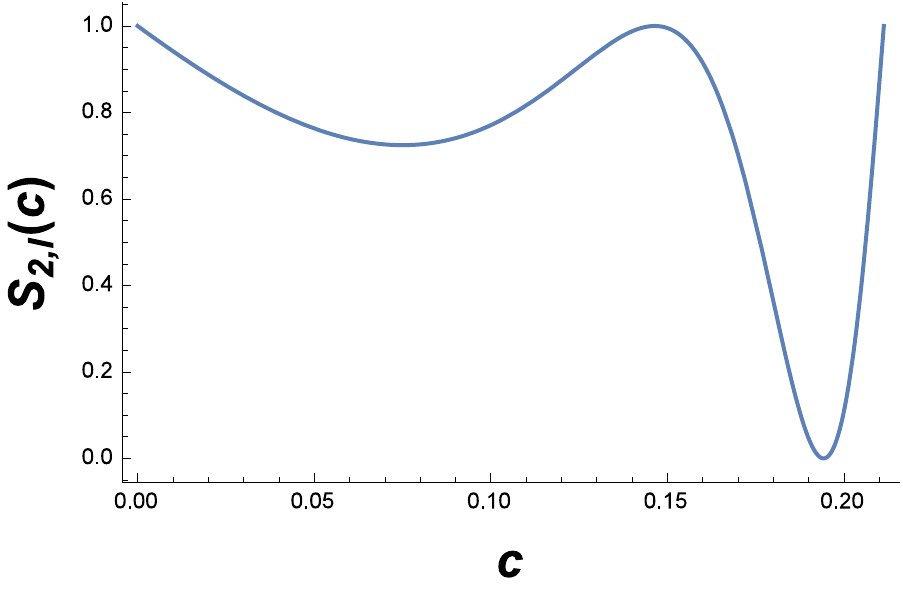}
    \caption{\label{fig:fig3}}
  \end{subfigure}%
  \begin{subfigure}[b]{0.4\linewidth}
    \centering\includegraphics[width=135pt]{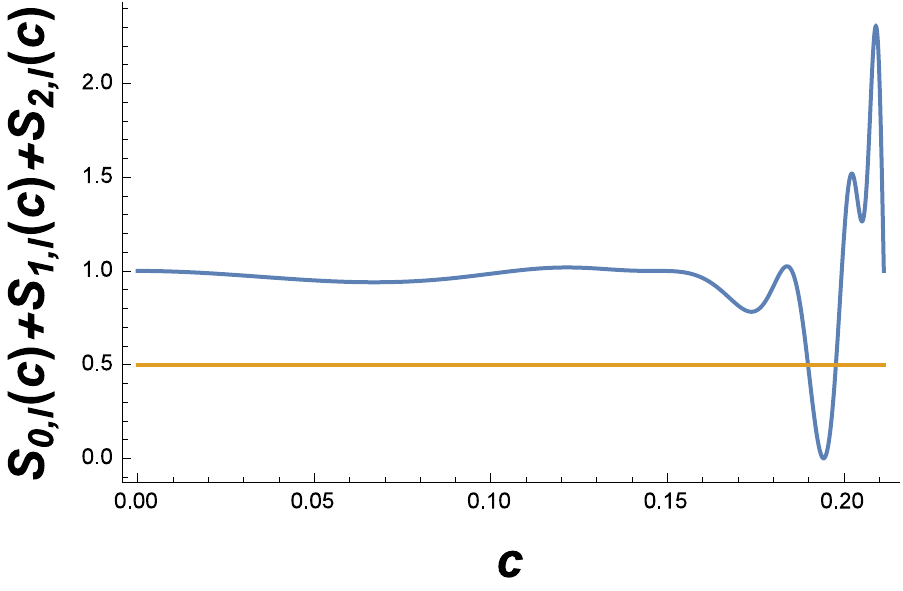}
    \caption{\label{fig:fig4}}
  \end{subfigure}
  \caption{  (\subref{fig:fig1}), (\subref{fig:fig2}), (\subref{fig:fig3}) and (\subref{fig:fig4}) show the graphs of $S_{0,l}(c),$  $S_{1,l}(c),$ $S_{2,l}(c)$ and \\ $(S_{0,l}+S_{1,l}+S_{2,l})(c)$.}
\label{fig:sratio}
\end{figure}
\newpage
\noindent Note that the conditions (\ref{eq05}), (\ref{g1}) and (\ref{g2}) give the condition (\ref{eq06}) $$ 0<\sum_{i=0}^{2} s_{i,l}(n) < 1.$$

The conditions ~(\ref{eq05}) together with ~(\ref{g1}) to ~(\ref{eq6a}) define the feasible domain $F_d^l$ is to be: 
\begin{eqnarray}\label{eqfd}
\fl F_d^l = \Big\{ \; c \in \left(0,\; \frac{3-\sqrt{3}}{6}\right) \; : \;  S_{i,l}(c) > 0 \; \textrm{for} \; i=0,1,2, & G_{0,l}(c) >0,  G_{1,l}(c) >0  \Big\} .
\end{eqnarray} 

To compute the feasible domain $F_d^l,$  we need to find subinterval(s) of $\left(0, \frac{3-\sqrt{3}}{6}\right)$ which satisfies the conditions of (\ref{eqfd}). By using Mathematica software, we employ the following command to obtain the feasible domain
\begin{eqnarray*}
\fl \textup{N[Reduce}[\bigg\{S_{0,l}(c) > 0, & S_{1,l}(c) > 0, S_{2,l}(c) > 0,  G_{0,l}(c) > 0,  G_{1,l}(c) > 0,  0<c<\frac{3-\sqrt{3}}{6} \bigg\}, c]].
\end{eqnarray*}
This yields: 
\begin{eqnarray*}
 F_d^l = (0.188816...,\; 0.194271...) \cup (0.194271...,\;  0.199413...) \equiv F_{d_1}^l \cup F_{d_2}^l.
\end{eqnarray*}

From the Eqn.(\ref{eq04}), the graphs of $\mathcal{R}(c)$ are plotted in the sub-domains $F_{d_1}^l$ and $F_{d_2}^l$ of $F_d^l$ which are shown in Figure \ref{fig:R}.

\begin{figure}[htp]
	\centering
	\begin{subfigure}[b]{0.5\linewidth}
		\centering\includegraphics[width=160pt]{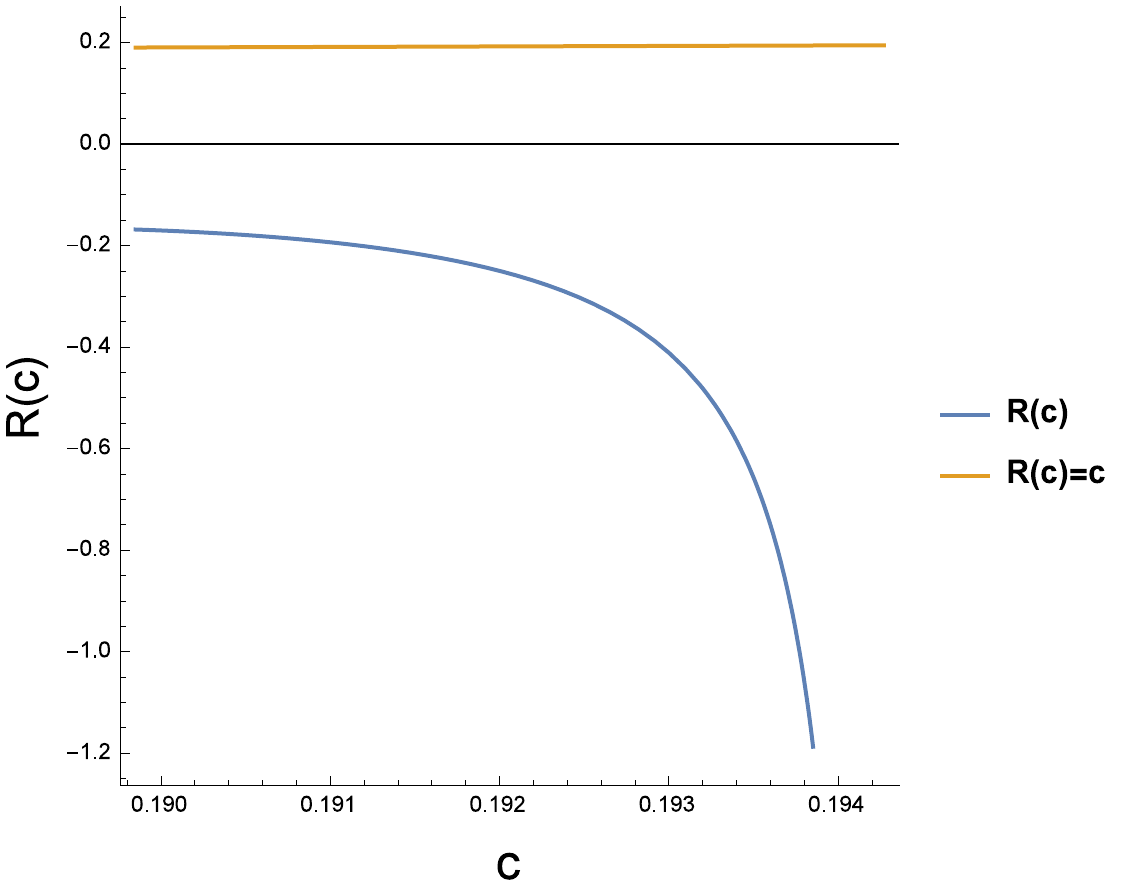}
		\subcaption{ $\mathcal{R}$ has no fixed point in $F_{d_1}^l.$ }
		\label{fig:picf1}
	\end{subfigure}%
	\begin{subfigure}[b]{0.5\linewidth}
		\centering\includegraphics[width=160pt]{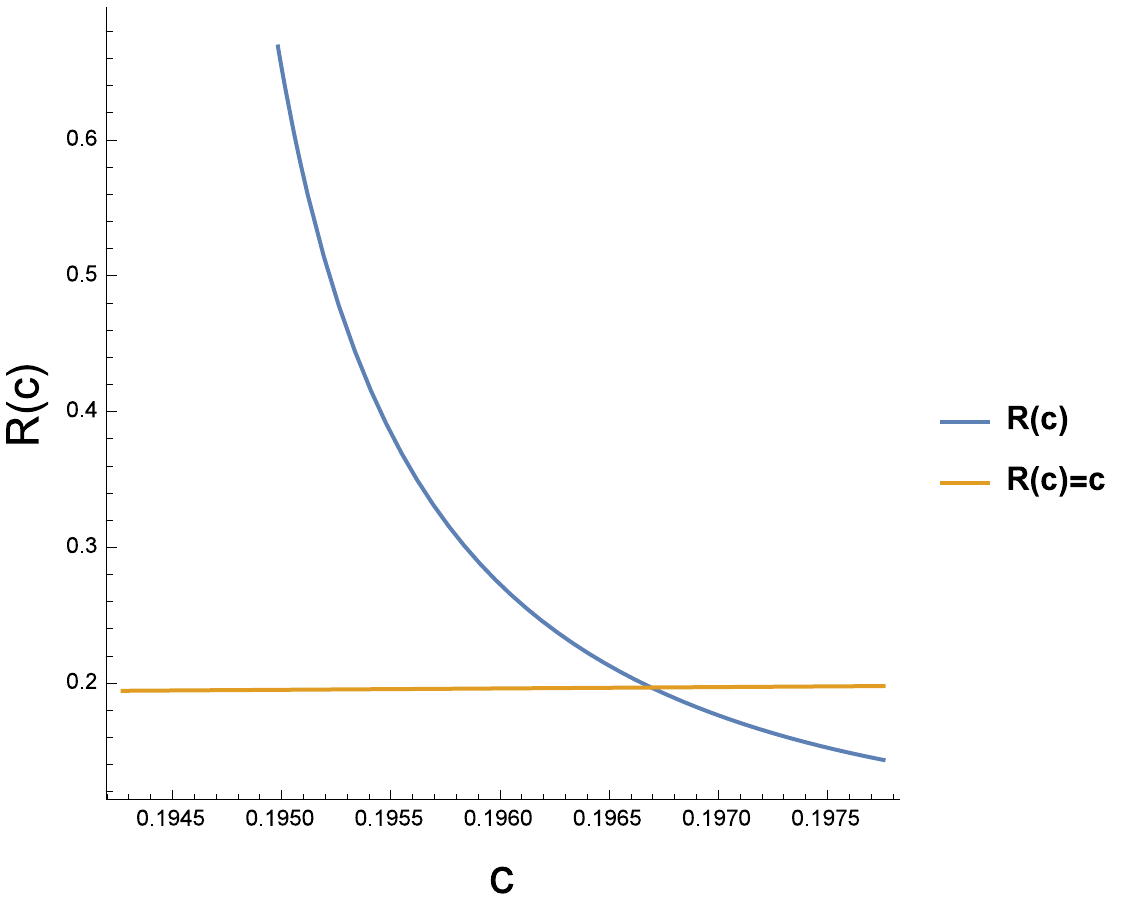}
		\subcaption{$\mathcal{R}$ has only one fixed point in $F_{d_2}^l.$ }
		\label{fig:picf2}
	\end{subfigure}
	\caption{The graph of $\mathcal{R} : F_d^l \rightarrow \mathbb{R}$ and the diagonal $\mathcal{R}(c) = c.$}
	\label{fig:R}
\end{figure} 

The map $\mathcal{R}  : F_d^l \rightarrow \mathbb{R}$ is expanding in the neighborhood of fixed point $c_l^*$ which is illustrated in Figure~\ref{fig:picf2}. By Mathematica computations, we get an unstable fixed points $c_l^* = 0.196693...$ in $F_d^l $ such that  $$\mathcal{R}(c_l^*) = c_l^*$$ corresponds to an infinitely renormalizable maps $f_{s_l^*}.$ We observe that the map $ f_{{s_l}^*}$ corresponding to $c_l^*$ has the following property $$\{c_l^*\} = \bigcap\limits_{n \geq 1} I_{1,l}^n. $$

\noindent In other words, consider the scaling data ${s_l}^* : \mathbb{N} \rightarrow \Delta^3$  with 
 \begin{eqnarray*}
 {s_l}^*(n) &= (s_{0,l}^*(n), s_{1,l}^*(n), s_{2,l}^*(n) )  \\
  &=\left(\frac{b_{c_l^*}(0)-b_{c_l^*}^4(0)}{b_{c_l^*}(0)}, \; \frac{b_{c_l^*}^2(0)-b_{c_l^*}^5(0)}{b_{c_l^*}(0)} , \; \frac{b_{c_l^*}^3(0)}{b_{c_l^*}(0)}\right).
 \end{eqnarray*}
  Then $\sigma({s_l^*} ) = s_l^* $ and using Lemma \ref{lem1} we have $$R^lf_{s_l^*} = f_{s_l^*}.$$

\end{proof}

\subsection{Renormalization on $I_R = [\tilde{b}_c(1), 1]$}\label{p22}

\noindent In subsection \ref{p21}, the bimodal map $b_c(x)$ has two critical points $c \in I_L$ and $1-c \in I_R$ and we define the piece-wise renormalization on $I_L.$ In similar fashion, to define the renormalization on $I_R$ with $c \in I_R$, from Equation \ref{eqb}, we consider $$ \tilde{b}_c(x) = 1-\frac{4c^3 - 3 c^2 + 6 c x - 6 c^2 x - 3 x^2 + 2 x^3}{( 2 c-1)^3}$$ where $x \in [0,1]$ and $c  \in [ \frac{3}{4},1].$

\noindent Note that ${I_L}^{\strut\mathrm{o}} \cap {I_R}^{\strut\mathrm{o}} = \phi,$ for  $c \in [\frac{3+\sqrt{3}}{6}, 1].$

 \noindent Let $f_{s_r} \in \mathcal{U}_\infty$ be given by the proper scaling data $s_r : \mathbb{N} \rightarrow \Delta^3 $ and define $$ {\tilde{I}}_{1,r}^n  = [\tilde{b}_c(1),min\{\tilde{b}_c^{-1}( z'_{n})\}] = [f_{s_r}(1), min\{f_{s_r}^{-1}( z'_{n})\}] ,$$  where $ \tilde{b}_c^{-1}(x)$ denotes the preimage(s) of $x$ under $\tilde{b}_c$ and $$ {\hat{I}}_{1,r}^n  = [\tilde{b}_c(y'_{n}),1] = [f_{s,r}(y'_{n}),1].$$  Let $$ h_{s_r,n} : [\tilde{b}_c(1),1] \rightarrow [\tilde{b}_c(1),1] $$ be defined by $$h_{s_r,n} = F_{1,r}(1)\circ F_{1,r}(2)\circ F_{1,r}(3) \circ.....\circ F_{1,r}(n). $$
Furthermore, let $${\tilde{h}}_{s_r,n} : [\tilde{b}_c(1),1] \rightarrow {\tilde{I}}_{1,r}^n \;\; \textrm{and} \;\; {\hat{h}}_{s_r,n} : [\tilde{b}_c(1),1] \rightarrow {\hat{I}}_{1,r}^n $$ be the affine orientation preserving homeomorphisms. Then define $$R_n^rf_s: h_{s_r,n}^{-1}(D_{s_r}) \rightarrow [\tilde{b}_c(1),1]$$  by $$R_n^rf_{s_r}(x) =  \left\{\begin{array}{ll}
R_n^{r-}f_{s_r}(x), \;\;\; \;  \; \textrm{if} \; x \in h_{s_r,n}^{-1}(I_{0,r}^n) \\ R_n^{r+}f_{s_r}(x), \;\;\;  \;  \; \textrm{if} \; x \in h_{s_r,n}^{-1}(I_{2,r}^n)
\end{array}
\right.$$ \\
where, $$R_n^{r-}f_{s_r}: h_{s_r,n}^{-1}(\mathop{\cup}\limits_{n \geq 1}I_{0,r}^n) \rightarrow [\tilde{b}_c(1),1] $$\;\; and $$ R_n^{r+}f_{s_r}: h_{s_r,n}^{-1}(\mathop{\cup}\limits_{n \geq 1}I_{2,r}^n) \rightarrow [\tilde{b}_c(1),1]$$ are defined by $$R_n^{r-}f_{s_r}(x) = {\tilde{h}}_{s_r,n}^{-1} \circ f_{s_r}^{-1} \circ h_{s_r,n}(x)$$ $$R_n^{r+}f_{s_r}(x) = {\hat{h}}_{s_r,n}^{-1} \circ f_{s_r} \circ h_{s_r,n}(x), $$ which are illustrated in Figure~\ref{fig:renomopr}.

\begin{figure}[!htb]
\centering
{\includegraphics [width=110mm]{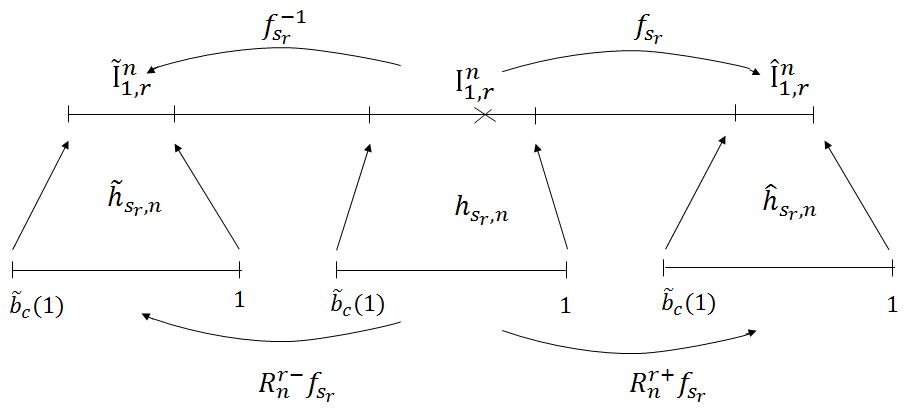}}
\caption{ }
 \label{fig:renomopr}
\end{figure}

\noindent Let $\sigma: (\Delta^3)^{\mathbb{N}} \rightarrow (\Delta^3)^{\mathbb{N}}$ be the shift map which is defined as $ \sigma(s_r^1s_r^2s_r^3s_r^4....) = (s_r^2s_r^3s_r^4....), $ where $s_r^i \in \Delta^3$ for all $i \in \mathbb{N}.$

\begin{Lem}\label{lem1r}
\ Let $s_r : \mathbb{N} \rightarrow \Delta^3$ be proper scaling data such that $f_{s_r}$ is infinitely renormalizable. Then $$R_n^r{f_{s_r}} = f_{\sigma^{n}(s_r)}. $$
\end{Lem}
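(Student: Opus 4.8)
The plan is to run the argument in complete parallel with the proof of Lemma~\ref{lem1}, because the objects attached to $I_R=[\tilde b_c(1),1]$ are the mirror images, under the involution $\rho(x)=1-x$, of the objects attached to $I_L$ used there: the intervals $I_{i,r}^n$, the affine maps $F_{i,r}(n)$, the homeomorphisms $h_{s_r,n}$, $\tilde h_{s_r,n}$, $\hat h_{s_r,n}$, and the endpoint labels $x'_n,y'_n,z'_n,w'_n$ are obtained from the corresponding left-hand data by applying $\rho$, the reversal of orientation being exactly why $\min$ and $\max$ trade places in the definitions of $y'_{2n-1},y'_{2n},z'_{2n-1},z'_{2n}$. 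One way to finish is therefore to verify $R_n^r f_{s_r}=\rho\circ R_n^l f_{s_l}\circ\rho$ and $f_{\sigma^n(s_r)}=\rho\circ f_{\sigma^n(s_l)}\circ\rho$ for the reflected data $s_l$ and then quote Lemma~\ref{lem1}. Below I outline the direct argument instead; it has two ingredients: (a) $h_{s_r,n}$ identifies the domain of $f_{\sigma^n(s_r)}$ with that of $R_n^r f_{s_r}$, and (b) the two maps agree on each affine piece.

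For (a): $h_{s_r,n}=F_{1,r}(1)\circ\cdots\circ F_{1,r}(n)$ by definition, while the level-$k$ interval of the shifted data, whose affine maps are built from $s_r(n{+}1),s_r(n{+}2),\dots$ and hence coincide with $F_{i,r}(n{+}\cdot)$, is $F_{1,r}(n{+}1)\circ\cdots\circ F_{1,r}(n{+}k{-}1)\circ F_{i,r}(n{+}k)\big([\tilde b_c(1),1]\big)$. Composing the two strings of affine maps gives $h_{s_r,n}\big(I_{i,r}^{\,k}(\sigma^n(s_r))\big)=I_{i,r}^{\,n+k}(s_r)$ for $i=0,1,2$ and all $k\geq1$. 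Since $I_{0,r}^{m}\cup I_{2,r}^{m}\subset I_{1,r}^{m-1}$ and $I_{1,r}^{m-1}\subset I_{1,r}^{n}$ exactly when $m\geq n{+}1$, it follows that $h_{s_r,n}\big(D_{\sigma^n(s_r)}\big)=\bigcup_{m\geq n+1}\big(I_{0,r}^{m}\cup I_{2,r}^{m}\big)=D_{s_r}\cap I_{1,r}^{n}$, i.e.\ $D_{\sigma^n(s_r)}=h_{s_r,n}^{-1}(D_{s_r})$, which is the domain of $R_n^r f_{s_r}$. That $\sigma^n(s_r)$ is still proper is immediate from $d\big(\sigma^n(s_r)(k),\partial\Delta^3\big)=d\big(s_r(n{+}k),\partial\Delta^3\big)\geq\epsilon$, and the infinite renormalizability of $f_{\sigma^n(s_r)}$ is just clauses (1)--(2) of the definition for $s_r$ read at levels $\geq n{+}1$, where $f_{s_r}^{3^m-1}$ and $f_{s_r}^{3^m-2}$ are affine by hypothesis.

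For (b): fix $m\geq n{+}1$. Over $h_{s_r,n}^{-1}(I_{0,r}^{m})=I_{0,r}^{\,m-n}(\sigma^n(s_r))$ the map $R_n^r f_{s_r}$ equals $\tilde h_{s_r,n}^{-1}\circ f_{s_r}^{-1}\circ h_{s_r,n}$, a composition of affine homeomorphisms: $h_{s_r,n}$ and $\tilde h_{s_r,n}$ are affine by construction, and the branch of $f_{s_r}^{-1}$ involved is affine because $f_{s_r}$ is infinitely renormalizable, so each restriction $f_{s_r}|_{I_{0,r}^m}$, $f_{s_r}|_{I_{2,r}^m}$ is the affine extension of $\tilde b_c$ on the corresponding boundary (the identification of which pieces map to which is read off from the combinatorics in Figure~\ref{fig:ptr1}). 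An affine map of an interval is pinned down by its two endpoint values, so it is enough to evaluate both sides at $\partial I_{0,r}^{\,m-n}(\sigma^n(s_r))$; since $f_{\sigma^n(s_r)}$ restricted to that interval is, by definition, the affine extension of $\tilde b_c$ at that boundary, the required equality of endpoint values is precisely the system of scaling identities for $\sigma^n(s_r)$, i.e.\ the mirror of~(\ref{eq01})--(\ref{eq04}) at the relevant parameters. The branch over $h_{s_r,n}^{-1}(I_{2,r}^{m})$ is handled in the same way, with $f_{s_r}$ in place of $f_{s_r}^{-1}$ and $\hat h_{s_r,n}$ in place of $\tilde h_{s_r,n}$. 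Combining (a) and (b), $R_n^r f_{s_r}$ and $f_{\sigma^n(s_r)}$ have the same domain and the same restriction to every affine branch, hence coincide.

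The only genuine work is bookkeeping: carrying the orientation conventions of the $I_R$-side correctly (which is where the swapped $\min/\max$ come from), keeping the fixed renormalization level $n$ and the running interval level apart in the notation, and transcribing the endpoint computation from the proof of Lemma~\ref{lem1} with the reflection inserted. I do not anticipate any obstacle past this, since the statement is the exact mirror of Lemma~\ref{lem1}. If one wishes to avoid the endpoint computation, an alternative is induction on $n$ via the semigroup identity $R_n^r f_{s_r}=R_{n-1}^r\big(R_1^r f_{s_r}\big)$ together with the base case $R_1^r f_{s_r}=f_{\sigma(s_r)}$, but proving that identity requires essentially the same bookkeeping.
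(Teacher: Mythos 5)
The paper states Lemma~\ref{lem1r} (like its left-hand twin, Lemma~\ref{lem1}) without any proof, so there is no argument of the authors' to compare yours against; what you have written supplies the missing verification, and it is correct in substance. The two real ingredients are exactly the ones you isolate: the composition identity $h_{s_r,n}\big(I_{i,r}^{k}(\sigma^n(s_r))\big)=I_{i,r}^{n+k}(s_r)$, which gives $D_{\sigma^n(s_r)}=h_{s_r,n}^{-1}(D_{s_r})$ and matches the domains, and the observation that both maps are affine on each piece, so agreement reduces to agreement at endpoints. The reflection shortcut is also legitimate here, since $b_c(1-x)=1-b_c(x)$ and the right-hand affine maps $F_{i,r}$ are precisely the conjugates of the $F_{i,l}$ by $\rho(x)=1-x$ with the same numerical tri-factors, which is why $\min$ and $\max$ swap in the endpoint labels. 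The one place where your write-up glosses over a genuine subtlety is the endpoint check itself: $f_{\sigma^n(s_r)}$ is by definition the affine interpolation of $\tilde b_{c_n}$, where $c_n$ is the critical point of the \emph{renormalized} data, not of the original $\tilde b_{c}$; so the "mirror of~(\ref{eq01})--(\ref{eq04})" you invoke must include the recursion $c_{n+1}=\mathcal{R}(c_n)$ of~(\ref{eq14}), which is exactly what guarantees that the rescaled endpoint values of $f_{s_r}$ land on the graph of $\tilde b_{c_n}$. With that made explicit, the argument is complete; without it, the phrase "the affine extension of $\tilde b_c$ at that boundary" is ambiguous about which cubic is being interpolated at level $n$.
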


\noindent Let $f_{s_r}$ be infinitely renormalization, then for $n \geq 0,$ we have $$ f_{s_r}^{3^n}: D_{s_r} \cap I_{1,r}^n \rightarrow I_{1,r}^n $$ is well defined. \\
\ Define the renormalization $R^r : \mathcal{U}_\infty \rightarrow \mathcal{U}_\infty$ by
$$ R^rf_{s_r} = h_{s_r,1}^{-1} \circ f_{s_r}^3 \circ h_{s_r,1} .$$

\noindent The maps $f_{s_r}^{3^n-2} : {\tilde{I}}_{1,r}^n \rightarrow I_{1,r}^n $ and $f_{s_r}^{3^n-1} : {\hat{I}}_{1,r}^n \rightarrow I_{1,r}^n $ are the affine homeomorphisms whenever $f_{s_r} \in \mathcal{U}_\infty$. Then

\begin{Lem}\label{lem2r}
\ We have $(R^r)^n{f_{s_r}} : D_{\sigma^{n}(s_r)} \rightarrow [\tilde{b}_c(1),1]$ and $(R^r)^n{f_{s_r}} = R_n^r{f_{s_r}}.$
\end{Lem}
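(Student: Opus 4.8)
The plan is to prove both assertions by induction on $n$, running the same argument as for the left branch in \cref{lem2} and feeding in the shift identity $R_n^r f_{s_r}=f_{\sigma^n(s_r)}$ from \cref{lem1r}. Conceptually nothing new happens: the $I_R$-picture is the $I_L$-picture viewed through the reflection $\rho\colon x\mapsto 1-x$, which by the symmetry $\rho\circ b_c\circ\rho=b_c$ of a symmetric bimodal map carries $[0,b_c(0)]$ onto $[\tilde b_c(1),1]$, the concave-up unimodal piece onto the concave-down one, and the left renormalization operators $R^l,R_n^l$ onto $R^r,R_n^r$; thus \cref{lem2r} is the $\rho$-conjugate of \cref{lem2}. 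I will nonetheless indicate the direct induction.

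\emph{Base case.} The first step is $R^r f_{s_r}=R_1^r f_{s_r}$, together with the identification of the common domain $h_{s_r,1}^{-1}(D_{s_r})$ with $D_{\sigma(s_r)}$. The domain identity concerns only the affine tower: $h_{s_r,1}=F_{1,r}(1)$ carries $[\tilde b_c(1),1]$ onto $I_{1,r}^1$, the intervals $I_{0,r}^1,I_{2,r}^1$ lie outside $I_{1,r}^1$, and $h_{s_r,1}^{-1}(I_{j,r}^{1+k})=I_{j,\sigma(s_r)}^{k}$ for $j\in\{0,2\}$, $k\ge1$, whence $h_{s_r,1}^{-1}(D_{s_r})=\bigcup_{k\ge1}(I_{0,\sigma(s_r)}^{k}\cup I_{2,\sigma(s_r)}^{k})=D_{\sigma(s_r)}$. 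For the map, one unwinds $R^r f_{s_r}=h_{s_r,1}^{-1}\circ f_{s_r}^{3}\circ h_{s_r,1}$ and compares it, on the pieces $h_{s_r,1}^{-1}(\bigcup_m I_{0,r}^m)$ and $h_{s_r,1}^{-1}(\bigcup_m I_{2,r}^m)$, with the branches $\tilde h_{s_r,1}^{-1}\circ f_{s_r}^{-1}\circ h_{s_r,1}$ and $\hat h_{s_r,1}^{-1}\circ f_{s_r}\circ h_{s_r,1}$ of $R_1^r f_{s_r}$; the combinatorics $I_{1,r}^1\to I_{2,r}^1\to I_{0,r}^1\to I_{1,r}^1$ together with the conditions—part of $f_{s_r}\in\mathcal{U}_\infty$—that $f_{s_r}\colon\tilde I_{1,r}^1\to I_{1,r}^1$ and $f_{s_r}^2\colon\hat I_{1,r}^1\to I_{1,r}^1$ be affine bijections (the $n=1$ cases of $f_{s_r}^{3^n-2},f_{s_r}^{3^n-1}$) let the relevant part of $f_{s_r}^3$ be written as one application of $f_{s_r}$ (resp.\ the appropriate branch of $f_{s_r}^{-1}$) followed by an affine bijection onto $I_{1,r}^1$, which is then absorbed, with $h_{s_r,1}^{-1}$, into $\hat h_{s_r,1}^{-1}$ (resp.\ $\tilde h_{s_r,1}^{-1}$). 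Combined with \cref{lem1r} at $n=1$ this reads $R^r f_{s_r}=f_{\sigma(s_r)}$, whose domain is $D_{\sigma(s_r)}$ by the definition of the maps $f_s$.

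\emph{Inductive step.} Assume $(R^r)^n f_{s_r}=f_{\sigma^n(s_r)}=R_n^r f_{s_r}$ on $D_{\sigma^n(s_r)}$. Since $\sigma^n(s_r)\colon\mathbb{N}\to\Delta^3$ is again a proper scaling data and $f_{\sigma^n(s_r)}$ is infinitely renormalizable, the base case applies to it and gives $(R^r)^{n+1}f_{s_r}=R^r\!\bigl(f_{\sigma^n(s_r)}\bigr)=f_{\sigma^{n+1}(s_r)}$ on $D_{\sigma^{n+1}(s_r)}$, and by \cref{lem1r} the last term equals $R_{n+1}^r f_{s_r}$. This closes the induction and yields both $(R^r)^n f_{s_r}=R_n^r f_{s_r}$ and the stated domain $D_{\sigma^n(s_r)}$; the codomain $[\tilde b_c(1),1]$ is inherited through the conjugacies $h_{s_r,k}$.

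\emph{Main obstacle.} The only nonformal point is the base-case matching of $h_{s_r,1}^{-1}\circ f_{s_r}^{3}\circ h_{s_r,1}$ with the two branches of $R_1^r f_{s_r}$, where the care lies entirely in tracking orientations and preimage branches: on $I_R$ the normalizations $\tilde h_{s_r,n},\hat h_{s_r,n}$ are orientation-preserving homeomorphisms onto subintervals anchored at the right endpoint $1$ (reflecting that the relevant unimodal piece of $\tilde b_c$ is concave down), whereas $h_{s_r,n}=F_{1,r}(1)\circ\cdots\circ F_{1,r}(n)$ alternates orientation, so one must check that these signs combine with the orientations of the affine long-iterates $f_{s_r}^{3^n-1},f_{s_r}^{3^n-2}$ so as to produce the claimed identities, and that $R_n^{r-}$ uses precisely the branch of $f_{s_r}^{-1}$ whose image lies in $\tilde I_{1,r}^n$. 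Since this is the mirror image under $\rho$ of the computation already carried out for \cref{lem2}, it can be recorded briefly.
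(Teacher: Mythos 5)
Your proof is correct and follows exactly the route the paper intends: the paper states \cref{lem2r} without an explicit proof, offering only the preceding observation that $f_{s_r}^{3^n-2}\colon\tilde I_{1,r}^n\to I_{1,r}^n$ and $f_{s_r}^{3^n-1}\colon\hat I_{1,r}^n\to I_{1,r}^n$ are affine homeomorphisms together with \cref{lem1r}, and your induction (base case matching $h_{s_r,1}^{-1}\circ f_{s_r}^3\circ h_{s_r,1}$ with the two branches of $R_1^r$, then iterating via the shift) is precisely the argument those ingredients are meant to supply, mirroring \cref{lem2} under $x\mapsto 1-x$. No discrepancy to report.
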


\noindent The~\cref{lem1} and~\cref{lem2}  give the following result.

\begin{prop}\label{pr}
\ There exists a map $f_{s_r^*} \in  \mathcal{U}_\infty,$ where $s_r^*$ is characterized by $$R^rf_{s_r^*} = f_{s_r^*}. $$
\end{prop}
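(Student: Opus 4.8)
The plan is to establish \cref{pr} by mirroring the proof of \cref{pl}, organised around the reflection $\rho:[0,1]\to[0,1]$, $\rho(x)=1-x$. First I would record that the cubic in (\ref{eqb}) is point-symmetric about $(\frac{1}{2},\frac{1}{2})$, so that $\rho\circ b_c\circ\rho=b_c$, and that $\tilde b_c=b_{1-c}$; hence $\rho$ carries the $I_L$-construction at parameter $1-c$ to the $I_R$-construction at parameter $c$. From the definitions one then checks that $F_{i,r}=\rho\circ F_{i,l}\circ\rho$, $I_{i,r}^n=\rho(I_{i,l}^n)$, $D_{s_r}=\rho(D_{s_l})$ and $f_{s_r}=\rho\circ f_{s_l}\circ\rho$ for the matching scaling data; the min/max interchanges in the labelling of $x'_n,y'_n,z'_n,w'_n$ relative to $x_n,y_n,z_n,w_n$ are precisely what make these identities hold. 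Consequently $h_{s_r,n}=\rho\circ h_{s_l,n}\circ\rho$, the homeomorphisms $\tilde h_{s_r,n}$ and $\hat h_{s_r,n}$ are the $\rho$-conjugates of $\tilde h_{s_l,n}$ and $\hat h_{s_l,n}$, and therefore $R^r=\rho\circ R^l\circ\rho$ as an operator on $\mathcal{U}_\infty$, so that any fixed point of $R^l$ transports to one of $R^r$.

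Next I would write down the scaling ratios on $I_R$ as the reflections of (\ref{eq01})--(\ref{eq04}): with $c_n$ the critical point of $f_{\sigma^n(s_r)}$ in $I_R$,
\begin{eqnarray*}
s_{0,r}(n) = \frac{\tilde b_{c_n}^4(1)-\tilde b_{c_n}(1)}{1-\tilde b_{c_n}(1)}, \qquad & s_{1,r}(n) = \frac{\tilde b_{c_n}^5(1)-\tilde b_{c_n}^2(1)}{1-\tilde b_{c_n}(1)}, \\
s_{2,r}(n) = \frac{1-\tilde b_{c_n}^3(1)}{1-\tilde b_{c_n}(1)}, \qquad & c_{n+1} = 1-\frac{c_n-\tilde b_{c_n}^2(1)}{s_{1,r}(n)} \equiv \mathcal{R}_r(c_n),
\end{eqnarray*}
together with the gap ratios $G_{0,r},G_{1,r}$ (reflections of (\ref{g1})--(\ref{g2})) and the range constraint $\frac{3+\sqrt{3}}{6}<c_n<1$ coming from ${I_L}^{\mathrm{o}}\cap{I_R}^{\mathrm{o}}=\emptyset$. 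Solving the first three relations in Mathematica yields $s_{i,r}(n)\equiv S_{i,r}(c_n)$, and positivity of $S_{0,r},S_{1,r},S_{2,r}$ together with positivity of the two gap ratios forces $0<\sum_{i=0}^{2}s_{i,r}(n)<1$, exactly as after (\ref{eqfd}). Since $S_{i,r}(c)=S_{i,l}(1-c)$ and $\mathcal{R}_r(c)=1-\mathcal{R}(1-c)$, all of this can be deduced from the left-hand computation rather than redone.

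I would then compute the feasible domain
\begin{eqnarray*}
\fl F_d^r = \Big\{\, c\in\left(\frac{3+\sqrt{3}}{6},\,1\right) \;:\; S_{i,r}(c)>0 \ \textrm{for}\ i=0,1,2,\ G_{0,r}(c)>0,\ G_{1,r}(c)>0 \,\Big\}
\end{eqnarray*}
with a single \texttt{Reduce} command; by the identities above, $F_d^r=1-F_d^l$ is a union of two subintervals $F_{d_1}^r\cup F_{d_2}^r$. Plotting $\mathcal{R}_r$ against the diagonal $\mathcal{R}_r(c)=c$ then shows that $\mathcal{R}_r$ has no fixed point in one subinterval and exactly one in the other, where it is expanding; this unique unstable fixed point is $c_r^*=1-c_l^*=0.803307\ldots$, and the associated map satisfies $\{c_r^*\}=\bigcap_{n\ge1}I_{1,r}^n$. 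Taking $s_r^*:\mathbb{N}\to\Delta^3$ to be the constant sequence with value $\bigl(S_{0,r}(c_r^*),S_{1,r}(c_r^*),S_{2,r}(c_r^*)\bigr)$ gives $\sigma(s_r^*)=s_r^*$, whence \cref{lem1r} together with \cref{lem2r} yields $R^rf_{s_r^*}=f_{s_r^*}$, with $f_{s_r^*}=\rho\circ f_{s_l^*}\circ\rho\in\mathcal{U}_\infty$.

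The main obstacle is entirely one of orientation bookkeeping: on $I_R$ the unimodal piece is concave down and the affine generators $F_{i,r}$ are built with $1-t$, so the roles of left and right endpoints --- and hence which branch of $\tilde b_{c_n}^{-1}$ enters the definitions of $\tilde h_{s_r,n}$ and of $R_n^{r-}f_{s_r}$ --- are reversed relative to the $I_L$ construction in subsection~\ref{p21}. Checking that these reversals are mutually consistent, i.e.\ that $R^r=\rho\circ R^l\circ\rho$, is the only step that is not a verbatim transcription; once it is in place, the existence and the expanding (unstable) character of $c_r^*$ transfer from \cref{pl} at no extra cost, and the remaining Mathematica work is just re-deriving $F_d^r$ as the mirror image of $F_d^l$.
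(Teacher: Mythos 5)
Your proposal is correct and follows essentially the same route as the paper: the paper's own proof of Proposition~\ref{pr} simply repeats the computation of Proposition~\ref{pl} on $I_R$ with the mirrored scaling ratios (\ref{eq11})--(\ref{eq14}), gap ratios (\ref{gr1})--(\ref{gr2}), feasible domain $F_d^r=(0.800587\ldots,0.805729\ldots)\cup(0.805729\ldots,0.811184\ldots)$ and unstable fixed point $c_r^*=0.803307\ldots$, all of which match your reflection identities $F_d^r=1-F_d^l$ and $c_r^*=1-c_l^*$. The only difference is that you make the conjugacy $R^r=\rho\circ R^l\circ\rho$ explicit, which the paper leaves implicit when it notes that $b_c$ and $\tilde b_c$ are identical and that $F_d^r$ ``can be computed as described in subsection~\ref{p21}.''
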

\begin{proof}
 Consider $s_r : \mathbb{N} \rightarrow \Delta^3$ be proper scaling data such that $f_{s_r}  $ is an infinitely renormalizable. Let $c_n$ be the critical point of $f_{\sigma^n(s_r)}.$ Then

\FloatBarrier
\begin{figure}[h]
\centering
{\includegraphics [scale=0.5]{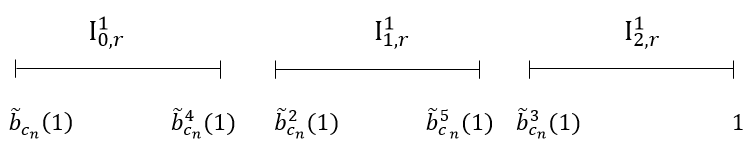}}
\caption{Length of intervals}
\label{fig:ilengthr}
\end{figure} 
\FloatBarrier

\noindent from Figure~\ref{fig:ilengthr}, we have the following scaling ratios
\begin{eqnarray}
 s_{0,r}(n) &= \frac{\tilde{b}_{c_n}^4(1)-\tilde{b}_{c_n}(1)}{1-\tilde{b}_{c_n}(1)} \label{eq11} \\ 
  s_{1,r}(n) &= \frac{\tilde{b}_{c_n}^5(1)-\tilde{b}_{c_n}^2(1)}{1-\tilde{b}_{c_n}(1)} \label{eq12} \\
  s_{2,r}(n) &= \frac{1-\tilde{b}_{c_n}^3(1)}{1-\tilde{b}_{c_n}(1)} \label{eq13} \\
 c_{n+1} &= 1-\frac{c_n-\tilde{b}_{c_n}^2(1)}{s_{1,r}(n)}  \equiv \mathcal{R}(c_n).  \label{eq14}
\end{eqnarray}

\noindent Since $(s_{0,l}(n), s_{1,l}(n), s_{2,l}(n)) \in \Delta^3,$ this implies the following conditions 
\begin{eqnarray}
 s_{0,r}(n),\; s_{1,r}(n), \; s_{2,r}(n) &> 0 \label{eq15} \\
 s_{0,r}(n) +s_{1,r}(n) +s_{2,r}(n) & < 1 \label{eq16} 
 \end{eqnarray}

\ As the intervals $I_{i,r}^n,$ for $i = 0,1,2,$ are mutually disjoint, we will introduce the gap ratios $g_{0,r}^n$ and $g_{1,r}^n$ in between $I_{0,r}^n \;\&\; I_{1,r}^n$ and $I_{1,r}^n \; \& \; I_{2,r}^n$ respectively. The gap ratios are defined as,  \\
for $n \in \mathbb{N},$
\begin{eqnarray}
g_{0,r}^n = \frac{\tilde{b}_{c_n}^2(1)-\tilde{b}_{c_n}^4(1)}{1-\tilde{b}_{c_n}(1)} \equiv G_{0,r}(c_n) >0  \label{gr1} \\
g_{1,r}^n = \frac{\tilde{b}_{c_n}^3(1)-\tilde{b}_{c_n}^5(1)}{1-\tilde{b}_{c_n}(1)} \equiv G_{1,r}(c_n) > 0 \label{gr2} \\
\frac{3+\sqrt{3}}{6} < c_n < 1 \label{eq17}
\end{eqnarray}

\noindent We use Mathematica for solving the equations~(\ref{eq11}),~(\ref{eq12}) and~(\ref{eq13}), we have the expressions for $s_{0,r}(n),\;s_{1,r}(n)$ and $\;s_{2,r}(n).$  Let $s_{i,r}(n) \equiv S_{i,r}(c_n)$ for $i = 0,1,2.$  


\noindent Note that the conditions (\ref{eq15}), (\ref{gr1}) and (\ref{gr2}) give the condition (\ref{eq16}) $$ 0<\sum_{i=0}^{2} s_{i,r}(n) < 1.$$
The conditions ~(\ref{eq15}) together with ~(\ref{gr1}) to ~(\ref{eq17}) define the feasible domain $F_d^r$ is to be:  
\begin{eqnarray}\label{eqfdr}
\fl F_d^r = \Big\{ \; c \in \left(\frac{3+\sqrt{3}}{6},\; 1\right) \; : \;  S_{i,r}(c) > 0 \; \textrm{for} \; i=0,1,2, & G_{0,r}(c) >0,  G_{1,r}(c) >0  \Big\} 
\end{eqnarray} 

\noindent One can compute feasible domain $F_d^r$ as described in subsection \ref{p21}.
This yields:
\begin{eqnarray*}
	F_d^r = (0.800587...,\; 0.805729...) \cup (0.805729...,\;  0.811184...) \equiv F_{d_1}^r \cup F_{d_2}^r.
\end{eqnarray*}

\noindent From the Eqn.(\ref{eq04}), the graphs of $\mathcal{R}(c)$ are plotted in the sub-domains $F_{d_1}^r$ and $F_{d_2}^r$ of $F_d^r$ which are shown in Figure \ref{fig:Rr}.

\begin{figure}[htp]
	\centering
	\begin{subfigure}[b]{0.5\linewidth}
		\centering\includegraphics[width=160pt]{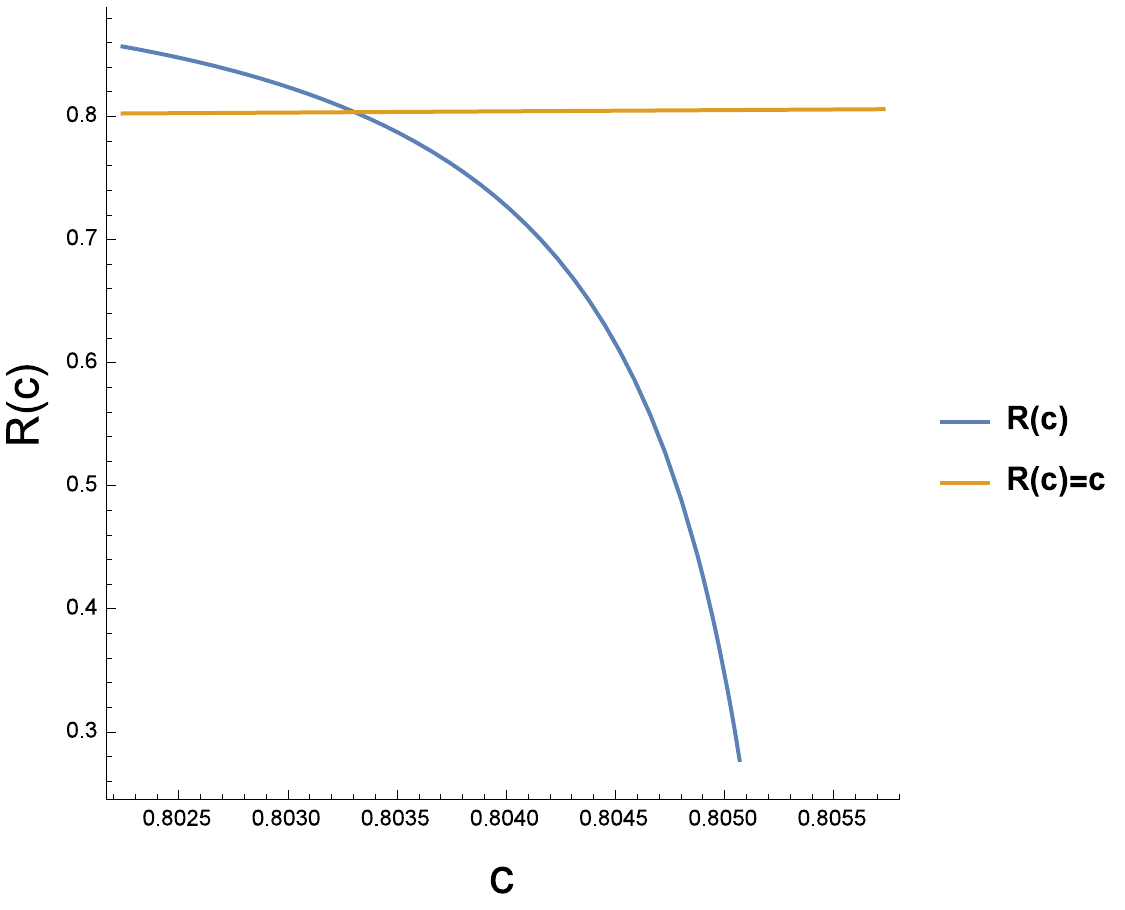}
		\subcaption{ $\mathcal{R}$ has only one fixed point in $F_{d_1}^r.$ }
		\label{fig:picf1r}
	\end{subfigure}%
	\begin{subfigure}[b]{0.5\linewidth}
		\centering\includegraphics[width=160pt]{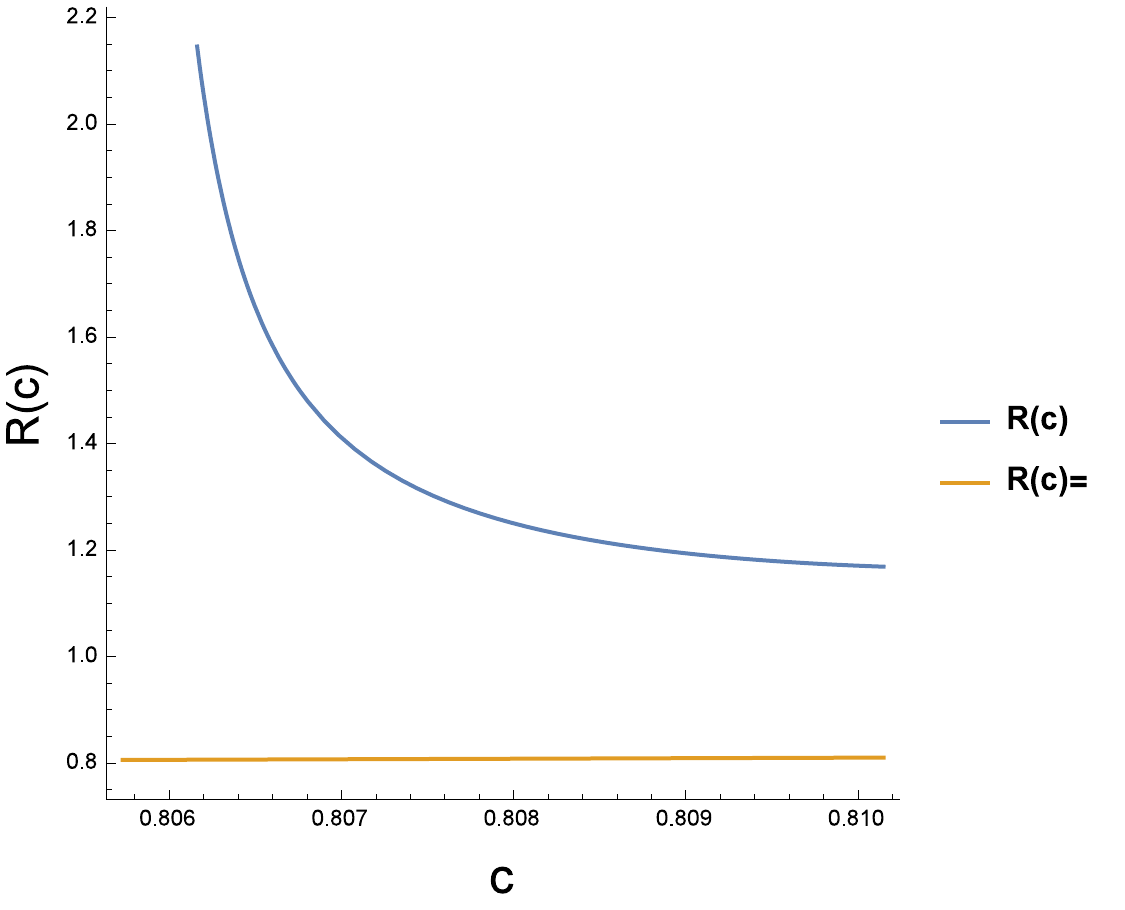}
		\subcaption{$\mathcal{R}$ has no fixed point in $F_{d_2}^r.$ }
		\label{fig:picf2r}
	\end{subfigure}
	\caption{The graph of $\mathcal{R} : F_d^r \rightarrow \mathbb{R}$ and the diagonal $\mathcal{R}(c) = c.$}
	\label{fig:Rr}
\end{figure} 

The map $\mathcal{R}  : F_d^r \rightarrow \mathbb{R}$ is expanding in the neighborhood of fixed point $c_r^*$ which is illustrated in Figure~\ref{fig:picf1r}. By Mathematica computations, we get an unstable fixed points $c_r^* = 0.803307...$ in $F_d^r $ such that \\ $$\mathcal{R}(c_r^*) = c_r^*$$ corresponds to an infinitely renormalizable maps $f_{s_r^*}.$ We observe that the map $ f_{{s_r}^*}$ corresponding to $c_r^*$ has the following property $$\{c_r^*\} = \bigcap\limits_{n \geq 1} I_{1,r}^n. $$

\noindent In other words, consider the scaling data ${s_r}^* : \mathbb{N} \rightarrow \Delta^3$  with 
 \begin{eqnarray*}
 {s_r}^*(n) &= (s_{0,r}^*(n), s_{1,r}^*(n), s_{2,r}^*(n) )  \\
  &=\left(\frac{\tilde{b}_{c_r^*}^4(1)-\tilde{b}_{c_r^*}(1)}{1-\tilde{b}_{c_r^*}(1)}, \; \frac{\tilde{b}_{c_r^*}^5(1)-\tilde{b}_{c_r^*}^2(1)}{1-\tilde{b}_{c_r^*}(1)} , \; \frac{1-\tilde{b}_{c_r^*}^3(1)}{1-\tilde{b}_{c_r^*}(1)}\right).
 \end{eqnarray*}
  Then $\sigma({s_r^*} ) = s_r^* $ and using Lemma \ref{lem1r} we have $$R^rf_{s_r^*} = f_{s_r^*}.$$
\end{proof}

\noindent For a given pair of proper scaling data $s = (s_l, s_r),$ we defined a map
 $$ f_{s} : D_{s_l} \cup D_{s_r} \rightarrow [0,1] $$   as        $$f_s(x) = \left\{\begin{array}{ll}
f_{s_l}(x), \;\;\; \;  \; \textrm{if} \; x \in D_{s_l}  \\ f_{s_r}(x), \;\;\;  \;  \; \textrm{if} \; x \in D_{s_r}
\end{array}
\right.$$
 Then, the renormalization of $f_s$ is defined as 
$$Rf_{s}(x) = \left\{\begin{array}{ll}
R^lf_{s_l}(x), \;\;\; \;  \; \textrm{if} \; x \in D_{s_l}  \\ R^rf_{s_r}(x), \;\;\;  \;  \; \textrm{if} \; x \in D_{s_r}
\end{array}
\right.$$
\ From proposition \ref{pl} and \ref{pr}, we conclude that $f_{s_l^*}$ and $f_{s_r^*}$ are period tripling infinitely renormalizable maps corresponding to the proper scaling data ${s_l^*}$ and ${s_r^*},$ respectively. Then, for a given pair of scaling data $s^* = (s_l^*, s_r^*),$ we have
$$Rf_{s^*}(x) = \left\{\begin{array}{ll}
R^lf_{s_l^*}(x), \;\;\; \;  \; \textrm{if} \; x \in D_{s_l^*}  \\ R^rf_{s_r^*}(x), \;\;\;  \;  \; \textrm{if} \; x \in D_{s_r^*}
\end{array} 
\right.$$ $$ 
 \hspace{1cm}= \left\{\begin{array}{ll}
f_{s_l^*}(x), \;\;\; \;  \; \textrm{if} \; x \in D_{s_l^*}  \\ f_{s_r^*}(x), \;\;\;  \;  \; \textrm{if} \; x \in D_{s_r^*}
\end{array}
\right.  $$
$$ \hspace{-2.3cm} =f_{s^*}(x)$$
The above construction will lead to the following theorem,

\begin{thm} \label{thm1}
\ There exists a map $f_{s^*} \in  B_\infty,$ where $s^* = (s_l^*, s_r^*) $ is characterized by $$Rf_{s^*} = f_{s^*}. $$ In particular, $ B_\infty  = \{f_{s^*}\}.$
\end{thm}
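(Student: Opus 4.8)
The plan is to assemble Theorem~\ref{thm1} directly from Propositions~\ref{pl} and~\ref{pr} together with the gluing construction described just above the theorem statement. First I would observe that $B_\infty$, the space of period tripling infinitely renormalizable symmetric bimodal maps, decomposes: any $f_s \in B_\infty$ is obtained as a join $f_{s_l}\oplus f_{s_r}$ of a left unimodal piece on $I_L = [0,b_c(0)]$ and a right unimodal piece on $I_R = [\tilde b_c(1),1]$, and by the definition of $R$ the renormalization acts coordinate-wise, $Rf_s = R^l f_{s_l} \oplus R^r f_{s_r}$. Hence $f_s$ is a fixed point of $R$ if and only if $f_{s_l}$ is a fixed point of $R^l$ and $f_{s_r}$ is a fixed point of $R^r$. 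This reduces the theorem to the two one-sided statements already proven.

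Next I would invoke Proposition~\ref{pl} to produce $f_{s_l^*}\in\mathcal U_\infty$ with $R^l f_{s_l^*} = f_{s_l^*}$, corresponding to the expanding fixed point $c_l^* = 0.196693\ldots$ of $\mathcal R$ on $F_d^l$, and symmetrically Proposition~\ref{pr} to produce $f_{s_r^*}\in\mathcal U_\infty$ with $R^r f_{s_r^*} = f_{s_r^*}$, corresponding to $c_r^* = 0.803307\ldots$. Setting $s^* = (s_l^*, s_r^*)$ and $f_{s^*} = f_{s_l^*}\oplus f_{s_r^*}$, the computation displayed before the theorem shows $Rf_{s^*} = f_{s^*}$. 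One must check that the join is legitimate — that ${I_L}^{\mathrm o}\cap {I_R}^{\mathrm o} = \emptyset$ so the two pieces can be concatenated into a genuine bimodal map — and here I would use the remarks in Sections~\ref{p21} and~\ref{p22} that this holds for $c\le \frac{3-\sqrt3}{6}$ on the left and $c\ge\frac{3+\sqrt3}{6}$ on the right, both of which are satisfied since $c_l^* < \frac{3-\sqrt3}{6} = 0.2113\ldots$ and $c_r^* > \frac{3+\sqrt3}{6} = 0.7886\ldots$. The symmetry $c_r^* = 1 - c_l^*$ (which is forced because $\tilde b_c$ and $b_{1-c}$ are conjugate by $x\mapsto 1-x$) makes $f_{s^*}$ genuinely symmetric, consistent with the class under study.

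For the uniqueness clause $B_\infty = \{f_{s^*}\}$, I would argue that the scaling data of \emph{any} $f_s\in B_\infty$ is constrained by the self-similarity relations~(\ref{eq01})–(\ref{eq04}) on the left and~(\ref{eq11})–(\ref{eq14}) on the right: being infinitely renormalizable with a renormalization fixed under $\sigma$ forces $c_n\equiv c_l^*$ (resp.\ $c_r^*$) to be a fixed point of the induced map $\mathcal R$ lying in the feasible domain. The Mathematica analysis summarized in Figures~\ref{fig:R} and~\ref{fig:Rr} shows $\mathcal R$ has exactly one fixed point in $F_d^l$ and exactly one in $F_d^r$; since the feasible domain already encodes all the constraints ((\ref{eq05})–(\ref{eq6a}) and their right-hand analogues) that a scaling datum coming from a bona fide bimodal map must satisfy, there is no other admissible choice, and the affine pieces $f_{s_l}|_{I_{0,l}^n}, f_{s_l}|_{I_{2,l}^n}$ are then determined as the affine extensions of $b_{c_l^*}$ on the boundary of the corresponding intervals. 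Therefore $f_s = f_{s^*}$.

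The main obstacle is the uniqueness half: turning "the induced map $\mathcal R$ on the critical point has a unique fixed point in the feasible domain" into "the renormalization operator $R$ on $B_\infty$ has a unique fixed point" requires knowing that the entire combinatorial-geometric data of an element of $B_\infty$ is rigidly determined by the single parameter $c$ (via the formulas~(\ref{eq01})–(\ref{eq03})), i.e.\ that $f_s$ is reconstructible from its critical point together with the fixed-point/self-similarity condition. I would handle this by noting that the affine maps $F_{i,l}, F_{i,r}$ are completely specified by the scaling tri-factors, the scaling tri-factors by $c$ through $S_{i,l}(c), S_{i,r}(c)$, and the map $f_s$ on $D_{s_l}\cup D_{s_r}$ by those affine maps together with the requirement that it restrict to the affine extension of $b_c$ on each generation's boundary — so the reduction to the scalar fixed-point problem for $\mathcal R$ is faithful, and the Mathematica count of fixed points closes the argument.
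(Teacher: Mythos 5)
Your proposal follows the paper's route exactly: the theorem is obtained by gluing the two one-sided fixed points from Propositions~\ref{pl} and~\ref{pr} via the coordinate-wise definition of $R$, which is precisely the computation the paper displays immediately before the theorem statement. The uniqueness clause $B_\infty=\{f_{s^*}\}$ is justified in the paper only by the same appeal you make — the numerically verified uniqueness of the unstable fixed point of $\mathcal{R}$ in the feasible domain — so your treatment matches the original in both structure and level of rigor.
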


\begin{rem}\label{rm1}
	If $f_{s^*}$ is the map with a pair of proper scaling data $s^* = (s_l^*, s_r^*) $ then the scaling data holds the following properties,  
	\begin{enumerate}
		\item[(i)] $ s_{2,l}^*  \leq (s_{1,l}^*)^2$
		\item[(ii)] $ s_{2,r}^*  \leq (s_{1,r}^*)^2$
	\end{enumerate}
\end{rem}

\begin{rem}
The invariant Cantor set of the map $f_{s^*}$ is next in complexity to the invariant doubling Cantor set of piece-wise affine renormalizable map \cite{CMMT} in the following sense,
\begin{enumerate}
	\item[(i)] like the both Cantor set, on each scale and everywhere the same scaling ratio are used.
	\item[(ii)] But unlike the doubling Cantor set, there are now the pair of three ratios at each scale.
\end{enumerate}
\noindent Furthermore, the geometry of the invariant Cantor set of $f_{s^*}$ is different from the geometry of the invariant Cantor set of piece-wise affine period tripling renormalizable map because the Cantor set of $f_{s^*}$ has $2-$copy of Cantor set of \cite{RK}. 	
\end{rem}

\section{$C^{1+Lip}$ extension of $f_{s^*} $} \label{extsn}
In Section \ref{p2}, we have constructed a piece-wise affine infinitely renormalizable map $f_{s^*}$ corresponding to the pair of scaling data $s^* = (s_l^*,\; s_r^*).$ Let us define a pair of scaling functions $$S_l: [0,\;b_{c_l^*}(0)]^2 \rightarrow [0,\;b_{c_l^*}(0)]^2 $$ $$S_r: [\tilde{b}_{c_r^*}(1),\;1]^2 \rightarrow [\tilde{b}_{c_r^*}(1),\;1]^2 $$ as 
\begin{eqnarray*}
 \hspace{-2cm} S_l\left(\begin{array}{ll}
x \\ y
\end{array} 
\right) =  \left(\begin{array}{ll}
b_{c_l^*}^2(0)-s_{1,l}^* \cdot x \\ s_{2,l}^*\cdot y
\end{array} 
\right);
& \;\;\;\;\;\;\;\; S_r\left(\begin{array}{ll}
x \\ y
\end{array} 
\right) =  \left(\begin{array}{ll}
\tilde{b}_{c_r^*}^2(1)+s_{1,r}^* \cdot (1-x) \\ 1-s_{2,r}^*\cdot (1-y)
\end{array} 
\right). \hspace{-1.5cm}
\end{eqnarray*}

\vspace{0.5cm}

Let $G$ be the graph of $g_{s^*}$ which is an extension of $f_{s^*}$ where $f_{s^*} : D_{s_l^*}\cup  D_{s_r^*} \rightarrow [0,\;1].$ Let $G_l^1$ and $G_l^2$ are the graphs of $g_{s^*}|_{[y_1,\;z_0]}$ and $g_{s^*}|_{[y_0,\;z_1]}$ respectively. Also, $G_r^1$ and $G_r^2$ are the graphs of $g_{s^*}|_{[z_0',\;y_1']}$ and $g_{s^*}|_{[z_1',\;y_0']}$ respectively which are shown in Figure \ref{fig:lipextsn}. Also, note that $G_r^1$ and $G_r^2$ are the reflections of $G_l^1$ and $G_l^2$ across the point $\left(\frac{1}{2},\frac{1}{2}\right)$ respectively. Define
$$ G_l = \cup_{n \geq 1} S_l^n(G_l^1 \cup G_l^2) \;\; \textrm{and} \;\;   G_r =  \cup_{n \geq 1} S_r^n(G_r^1 \cup G_r^2).$$
 Then, $G_l$ is the graph of a unimodal map $g_{s_l^*}$ which extends $f_{s_l^*}$ and $G_r$ is the graph of a unimodal map $g_{s_r^*}$ which extends $f_{s_r^*}$. Consequently,  $ G$ is the graph of $g_{s^*} = g_{s_l^*} \oplus g_{s_r^*}.$ We claim that $g_{s^*}$ is a $C^{1+Lip}$ bimodal map.
  
\noindent Let $B_l^0 = [0,\;b_{c_l^*}(0)] \times [0,\;b_{c_l^*}(0)]$ and $B_r^0 = [\tilde{b}_{c_r^*}(1),\; 1] \times [\tilde{b}_{c_r^*}(1),\;1].$ \\ For $n \in \mathbb{N},$ define $$B_l^n = S_l^n(B_l^0) \;\;\;\;\;\;\; \textrm{and} \;\;\;\;\;\;\; B_r^n = S_r^n(B_r^0)$$ as\\
 $ B_l^n=  \left\{\begin{array}{ll}
 \;[z_n,\;y_n] \times [0,\; \hat{y}_{n}], \;\;\; \textrm{if}\; n \; \textrm{is odd}  \\ \ [y_n,\;z_n] \times [0,\;\hat{y}_{n}], \;\;\; \textrm{if}\; n  \; \textrm{is even}
\end{array}
\right.$
\\ and \\
$ B_r^n=  \left\{\begin{array}{ll}
 \;[y_n',\;z_n'] \times [\hat{y'}_{n},\;1], \;\;\; \textrm{if}\; n \; \textrm{is odd}  \\ \ [z_n',\;y_n'] \times [\hat{y'}_{n},\;1], \;\;\; \textrm{if}\; n  \; \textrm{is even}.
\end{array}
\right.$ \\
\noindent Let $p_l^n$ and $p_r^n$ be the points on the graph of the bimodal map $b_{{c}_l^*}(x)$ and $b_{{c}_r^*}(x)$ respectively. For all $n \in \mathbb{N},$ $p_l^n$ and $p_r^n$ are defined as \\
$ p_l^n=  \left\{\begin{array}{ll}
\left(y_{\frac{n+1}{2}},\hat{y}_{\frac{n+1}{2}}\right), \;\;\; \textrm{if}\; n \; \textrm{is odd}  \\ \left(z_{\frac{n}{2}},\hat{z}_{\frac{n}{2}}\right), \;\;\; \;\;\;\; \;\;\textrm{if}\; n  \; \textrm{is even}
\end{array}
\right.$\\ 
$ p_r^n=  \left\{\begin{array}{ll}
\left(y'_{\frac{n+1}{2}},\hat{y'}_{\frac{n+1}{2}}\right), \;\;\; \textrm{if}\; n \; \textrm{is odd}  \\ \left(z'_{\frac{n}{2}},\hat{z'}_{\frac{n}{2}}\right), \;\;\; \;\;\;\; \;\;\textrm{if}\; n  \; \textrm{is even}
\end{array}
\right.$ \\
where  $ \hat{y}_{n} = b_{c_l^*}(y_n),$ $\hat{z}_{n} = b_{c_l^*}(z_n),$ $ \hat{y'}_{n} = \tilde{b}_{c_r^*}(y'_n)$ and $\hat{z'}_{n} = \tilde{b}_{c_r^*}(z'_n).$

\FloatBarrier
\begin{figure}[!htb]
\centering
{\includegraphics [width=100mm]{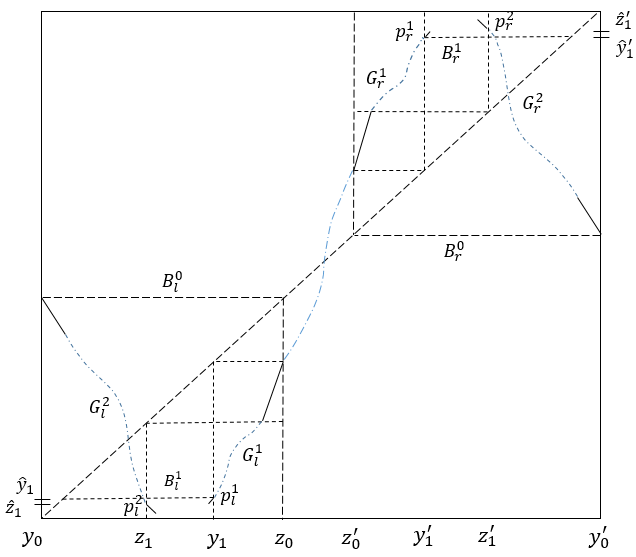}}
\caption{Extension of $f_{s^*}$}
\label{fig:lipextsn}
\end{figure} 
\FloatBarrier

Then the above construction will lead to following proposition,

\begin{prop}
\ $G $ is the graph of $g_{s^*}$ which is a $C^1$ extension of $f_{s^*}.$
\end{prop}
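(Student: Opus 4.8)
The plan is to verify that the piecewise-affine map $f_{s^*}$ together with the curves obtained by iterating the scaling maps $S_l,S_r$ assembles into a genuine $C^1$ map, and that the construction is self-consistent at every gluing point. First I would check that the sequence of boxes $B_l^n = S_l^n(B_l^0)$ is nested and shrinks to the critical point $c_l^*$: since $S_l$ contracts the $x$-direction by the factor $s_{1,l}^*<1$ and the $y$-direction by $s_{2,l}^*<1$, we get $\operatorname{diam}(B_l^n)\to 0$, and by Theorem~\ref{thm1} (equivalently, by the fixed-point property $\{c_l^*\}=\bigcap_n I_{1,l}^n$ from Proposition~\ref{pl}) the nested intersection is exactly $\{(c_l^*,b_{c_l^*}(c_l^*))\}$; symmetrically for $B_r^n$ and $c_r^*$. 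This shows that $G_l=\bigcup_{n\ge 1}S_l^n(G_l^1\cup G_l^2)$, together with the limit point, is the graph of a function $g_{s_l^*}$ defined on all of $[0,b_{c_l^*}(0)]$, and similarly for $g_{s_r^*}$; the reflection symmetry across $(\tfrac12,\tfrac12)$ then gives $g_{s^*}=g_{s_l^*}\oplus g_{s_r^*}$ as a well-defined bimodal map on $[0,1]$ extending $f_{s^*}$.

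Next I would establish that $g_{s^*}$ is $C^1$. On the interior of each box $B_l^n$ the graph $S_l^n(G_l^1\cup G_l^2)$ is, by construction, the image under an affine map of the smooth seed pieces $G_l^1,G_l^2$ (which are pieces of the cubic $b_{c_l^*}$, hence $C^\infty$), so $g_{s^*}$ is $C^1$ — indeed smooth — at every point other than the two critical points $c_l^*,c_r^*$ and the countable set of gluing points $\{p_l^n\},\{p_r^n\}$. The key self-consistency input is that the seed pieces $G_l^1=\text{graph}(g_{s^*}|_{[y_1,z_0]})$ and $G_l^2=\text{graph}(g_{s^*}|_{[y_0,z_1]})$ were chosen precisely so that $S_l$ maps the relevant endpoint data of level $n$ to that of level $n+1$: one checks that $S_l(p_l^n)=p_l^{n+2}$ and that the one-sided derivatives match across each $p_l^n$, because the affine renormalization pieces $f_{s_l^*}^{3^n-2}$ and $f_{s_l^*}^{3^n-1}$ are exactly the affine homeomorphisms realizing $h_{s_l^*,n}$, so the slopes produced by the two adjacent affine branches agree where they meet. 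I would verify the derivative-matching at a single generic $p_l^n$ by a direct computation with the affine factors $s_{1,l}^*,s_{2,l}^*$ and the cubic $b_{c_l^*}$, then note it propagates to all $n$ by applying $S_l$ (which scales $x$ and $y$ by fixed ratios, hence multiplies all slopes by the fixed constant $s_{2,l}^*/s_{1,l}^*$, preserving equality of one-sided derivatives).

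Then I would handle the critical points themselves: at $c_l^*$ the graph is squeezed inside $B_l^n$ for all $n$, and since $B_l^n$ has width $\asymp (s_{1,l}^*)^n$ and height $\asymp (s_{2,l}^*)^n$, Remark~\ref{rm1}(i) ($s_{2,l}^*\le (s_{1,l}^*)^2$) guarantees height$/$width $\lesssim (s_{1,l}^*)^n\to 0$, so the graph has a well-defined horizontal tangent at $c_l^*$ and the derivative extends continuously with value $0$ there; symmetrically at $c_r^*$ using Remark~\ref{rm1}(ii). Finally the central gluing piece joining $f(\max(J_l))$ to $f(\min(J_r))$ is taken to be a $C^{1+Lip}$ (in particular $C^1$) curve by the definition of the join, matching the one-sided derivatives of $g_{s_l^*}$ at $b_{c_l^*}(0)$ and of $g_{s_r^*}$ at $\tilde b_{c_r^*}(1)$. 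Assembling these pieces gives a globally $C^1$ map. The main obstacle I anticipate is the derivative-matching at the gluing points $p_l^n,p_r^n$: one must check that the seed pieces $G_l^1,G_l^2$ are not merely continuous continuations of the affine data but have matching slopes, which forces a compatibility relation among $s_{1,l}^*$, $s_{2,l}^*$, and the geometry of $b_{c_l^*}$ — this is exactly where the defining equations \eqref{eq01}–\eqref{eq04} for the fixed-point scaling data are used, and verifying it cleanly (rather than by brute Mathematica expansion) is the delicate part. The promotion from $C^1$ to $C^{1+Lip}$ — controlling the Lipschitz constant of $g_{s^*}'$ uniformly across scales, which will again lean on Remark~\ref{rm1} — I would defer to the subsequent development rather than include here, since the proposition as stated only claims $C^1$.
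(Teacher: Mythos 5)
Your proposal is correct and follows essentially the same route as the paper's proof: establish smoothness of the affine images of the seed pieces, verify one-sided derivative matching at the first-generation gluing points and propagate it to all $p_l^n,p_r^n$ by applying the affine scalings $S_l,S_r$, and handle the critical points via the ratio $s_{2,l}^*/s_{1,l}^*<1$ (the paper phrases this as the horizontal contraction versus the vertical one, while you invoke Remark~\ref{rm1}) to conclude the derivative extends continuously by $0$ there. The only cosmetic difference is that the paper treats the slope matching at $p_l^1,p_l^2$ as arranged by the choice of the gap-filling $C^1$ curves rather than as a compatibility relation to be verified, and your deferral of the Lipschitz estimate is appropriate since the proposition claims only $C^1$.
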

\begin{proof}
\ Since $G_l^1$ and $G_l^2$ are the graph of $f_{s_l^*}|_{[y_1, z_{0}]}$ and $f_{s_l^*}|_{[y_0, z_{1}]},$ respectively, and $G_r^1$ and $G_r^2$ are the graph of $f_{s_r^*}|_{[z_0', y_1']}$ and $f_{s_r^*}|_{[z_1', y_0']},$ respectively, we obtain $G_l^{2n+1} = S_l^n (G_l^1) $ and $G_l^{2n+2} = S_l^n (G_l^2) $ for each $n \in \mathbb{N}.$   Note that $G_l^n$ is the graph of a $C^1$ function defined 
\begin{eqnarray*}
 \textrm{on} \;\; &[z_{\frac{n-1}{2}},\; y_{\frac{n+1}{2}}] \;\;\; \textrm{if} \;\; n \in 4\mathbb{N}-1, \\
 \textrm{on}\;\; &[z_{\frac{n}{2}},\; y_{\frac{n}{2}-1}] \;\;\;\;\;\textrm{if} \;\; n \in 4\mathbb{N}, \\
 \textrm{on}\;\; &[y_{\frac{n+1}{2}},\; z_{\frac{n-1}{2}}] \;\;\;\textrm{if}\;\; n \in 4\mathbb{N}+1, \\
 \textrm{and on}\;\; &[y_{\frac{n}{2}-1},\; z_{\frac{n}{2}}]\;\;\;\;\; \textrm{if}\;\; n \in 4\mathbb{N}+2.
\end{eqnarray*}
\noindent Also, we have $G_r^{2n+1} = S_r^n (G_r^1) $ and $G_r^{2n+2} = S_r^n (G_r^2) $ for each $n \in \mathbb{N}.$ Note that $G_r^n$ is the graph of a $C^1$ function defined 
\begin{eqnarray*}
 \textrm{on} \;\; &[y_{\frac{n+1}{2}}',\; z_{\frac{n-1}{2}}'] \;\;\; \textrm{if} \;\; n \in 4\mathbb{N}-1, \\
 \textrm{on}\;\; &[y_{\frac{n}{2}-1}',\; z_{\frac{n}{2}}'] \;\;\;\;\;\textrm{if} \;\; n \in 4\mathbb{N}, \\
 \textrm{on}\;\; &[z_{\frac{n-1}{2}}',\; y_{\frac{n+1}{2}}'] \;\;\;\textrm{if}\;\; n \in 4\mathbb{N}+1, \\
 \textrm{and on}\;\; &[z_{\frac{n}{2}}',\; y_{\frac{n}{2}-1}']\;\;\;\;\; \textrm{if}\;\; n \in 4\mathbb{N}+2.
\end{eqnarray*}
 To prove the proposition, we have to check continuous differentiability at the points $p_l^n$ and $p_r^n.$ Consider the neighborhoods $(y_1-\epsilon,y_1+\epsilon)$ around $y_1$ and $(z_1-\epsilon,z_1+\epsilon)$ around $z_1$, the slopes are given by an affine pieces of   $f_{s_l^*}$ on the subintervals $(y_1-\epsilon,y_1)$ and $(z_1,z_1+\epsilon)$ and the slopes are given by the chosen $C^1$ extension on $(y_1,y_1+\epsilon)$ and  $(z_1-\epsilon,z_1).$ This implies, $G_l^1$ and $G_l^2$ are $C^1$ at $p_l^1$ and $p_l^2,$ respectively. \\ Let $\gamma_1 \subset G_l $ be the graph over the interval $(y_1-\epsilon,y_1+\epsilon)$ and $\gamma_2 \subset G_l $ be the graph over the interval $(z_1-\epsilon,z_1+\epsilon),$ \\ then the graph $G_l$ locally around $p_l^n$ is equal to $\left\{\begin{array}{ll}
S_l^{\frac{n-1}{2}}{(\gamma_1)} \;\;\; \textrm{if}\; n \; \textrm{is odd} \\ S_l^{\frac{n-2}{2}}{(\gamma_2)} \;\;\; \textrm{if}\; n  \; \textrm{is even}
\end{array}
\right.$. \\ This implies, for $n \in \mathbb{N},$  $G_l^{2n-1}$ is $C^1$ at $p_l^{2n-1}$ and $G_l^{2n}$ is $C^1$ at $p_l^{2n}.$ \\ Hence $G_l$ is a graph of a $C^1$ function on $[0,b_{c_l^*}(0)] \setminus \{c_l^*\}.$\\ We note that the horizontal contraction of $S_l$ is smaller than the vertical contraction. This implies that the slope of $G_l^n$ tends to zero when $n$ is large. Therefore, $G_l$ is the graph of a $C^1$ function  $g_{s_l^*}$ on $[0,b_{c_l^*}].$ 
\noindent In similar way, one can prove that $G_r$ is the graph of a $C^1$ function  $g_{s_r^*}$ on $[\tilde{b}_{c_r^*},1].$ Therefore, $G = G_l \oplus G_r$ is the graph of a $C^1$ bimodal map $g_{s^*} =g_{s_l^*} \oplus g_{s_r^*}$ which is a $C^1$ extension of $f_{s^*}.$
\end{proof}

\begin{prop}
\ Let $g_{s^*}$ be the function whose graph is $G$ then $g_{s^*}$ is a $C^{1+Lip}$ bimodal map. 
\end{prop}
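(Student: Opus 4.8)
The plan is to upgrade the $C^1$ statement of the previous proposition to $C^{1+Lip}$ by controlling the Lipschitz constant of the derivative $g_{s^*}'$ separately on the affine part $D_{s_l^*}\cup D_{s_r^*}$ (where $g_{s^*}'$ is piece-wise constant) and on the closure of the connecting pieces $G_l,G_r$, and then checking that nothing goes wrong at the junction points $p_l^n,p_r^n$ and in the limit at the critical points $c_l^*,c_r^*$. By the symmetry across $\left(\tfrac12,\tfrac12\right)$ recorded before Figure~\ref{fig:lipextsn}, it suffices to treat the left piece $g_{s_l^*}$ on $[0,b_{c_l^*}(0)]$; the right piece follows by reflection.

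First I would fix the chosen $C^{1+Lip}$ curve on the two ``seed'' pieces $G_l^1$ (over $[y_1,z_0]$) and $G_l^2$ (over $[y_0,z_1]$): each is a compact graph of a $C^1$ function joining the endpoint data of $b_{c_l^*}$, so each has a finite Lipschitz constant $L_1$ for its derivative; put $L_0=\max\{L_1,\text{(Lip const of }b_{c_l^*}')\}$, which also dominates the zero Lipschitz constant of the affine pieces. Next I would compute how the self-copying map $S_l$ rescales the Lipschitz constant of a derivative: if a graph $\gamma$ over an interval of length $\ell$ carries a function with $\mathrm{Lip}(\gamma')\le L$, then $S_l(\gamma)$ is a graph over an interval of length $s_{1,l}^*\ell$ whose function has derivative multiplied by the ratio $s_{2,l}^*/s_{1,l}^*$ of vertical to horizontal contraction, and hence Lipschitz constant multiplied by $s_{2,l}^*/(s_{1,l}^*)^2$. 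By Remark~\ref{rm1}(i) we have $s_{2,l}^*\le (s_{1,l}^*)^2$, so this factor is $\le 1$: each application of $S_l$ does not increase $\mathrm{Lip}$ of the derivative. Therefore every piece $G_l^n=S_l^{\lfloor\cdot\rfloor}(G_l^{1}\text{ or }2)$ has $\mathrm{Lip}((g_{s_l^*}|_{G_l^n})')\le L_0$, and likewise the affine pieces trivially satisfy this bound.

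It then remains to pass from a uniform bound on each piece to a global Lipschitz bound for $g_{s_l^*}'$ on all of $[0,b_{c_l^*}(0)]$. The point is that $g_{s_l^*}'$ is continuous there (by the previous proposition) and the pieces are consecutive and abut at the points $p_l^n$, where left and right derivatives already agree; for two points $x<x'$ in different pieces one interpolates through the finitely or countably many intermediate junctions, using continuity of $g_{s_l^*}'$ together with the per-piece bound and the fact that the junction abscissae accumulate only at $c_l^*$. Concretely, since the derivative of $g_{s_l^*}$ tends to $0$ at $c_l^*$ (the horizontal contraction of $S_l$ beats the vertical one, as already observed), the sum of the variations of $g_{s_l^*}'$ over the pieces lying between $x$ and $x'$ is bounded by $L_0|x-x'|$ after a telescoping/absolute-continuity argument, giving $|g_{s_l^*}'(x)-g_{s_l^*}'(x')|\le L_0|x-x'|$. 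Reflecting via $S_r$ gives the same for $g_{s_r^*}$, and since the join $g_{s^*}=g_{s_l^*}\oplus g_{s_r^*}$ is made along a $C^{1+Lip}$ curve by definition, $g_{s^*}$ is $C^{1+Lip}$ on $[0,1]$.

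The main obstacle I anticipate is the last step: a uniform Lipschitz bound on each countable piece does not, a priori, give a global Lipschitz bound, because the Lipschitz constant of a countable concatenation can blow up at the accumulation point $c_l^*$. The crux is therefore the quantitative estimate that the derivative decays (indeed the slopes of $G_l^n$ shrink geometrically because $s_{2,l}^*/s_{1,l}^*<1$) fast enough that the total variation of $g_{s_l^*}'$ across all pieces between any two points is controlled linearly in the distance; making this telescoping estimate rigorous, and checking it also handles pairs of points straddling $c_l^*$, is where the real work lies. The hypothesis $s_{2,l}^*\le(s_{1,l}^*)^2$ from Remark~\ref{rm1} is exactly what is needed to keep the per-scale Lipschitz factor from exceeding $1$, so I would highlight that this remark is the structural input that makes the $C^{1+Lip}$ regularity — rather than merely $C^1$ — possible.
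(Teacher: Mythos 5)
Your proposal is correct and follows essentially the same route as the paper: the key point in both is that applying $S_l$ multiplies the Lipschitz constant of the derivative by $s_{2,l}^*/(s_{1,l}^*)^2\le 1$ (Remark~\ref{rm1}), giving a uniform bound over all pieces $G_i^n$. You are in fact somewhat more careful than the paper, which stops at the per-piece bound $\lambda(g_{s_i^*}^{n+1})'\le\lambda(g_{s_i^*}^{1})'$ and does not spell out the telescoping argument, through the junction points accumulating at $c_l^*$ and using continuity of the derivative there, that converts the per-piece bounds into a global Lipschitz bound for $g_{s^*}'$.
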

\begin{proof}
\  As the function $g_{s^*}$ is a $C^1$ extension of $f_{s^*}.$ We have to show that, for $i \in  \{l, \;r\},$ $G_i^n$ is the graph of a $C^{1+Lip}$ function 
 $$ g_{s_i^*}^n: Dom(G_i^n) \rightarrow [0,1] $$ with an uniform Lipschitz bound. \\ That is,
  for $n \geq 1,$ 
  $$Lip((g_{s_i^*}^{n+1})') \leq Lip((g_{s_i^*}^n)')$$ 
\noindent let us assume that $g_{s_l^*}^n$ is $C^{1+Lip}$ with Lipschitz constant $\lambda_n$ for its derivatives. We show that $\lambda_{n+1} \leq \lambda_{n}$. \\
 For given $(u,v)$ on the graph of $g_{s_l^*}^n,$ there is $(\tilde{u},\tilde{v}) = S_l(u,v)$ on the graph of $g_{s_l^*}^{n+1}$, this implies $$g_{s_l^*}^{n+1}(\tilde{u}) = s_{2,l}^*\cdot g_{s_l^*}^n(u)$$
 Since $u = \frac{b_{c_l^*}^2(0)-\tilde{u}}{{s_{1,l}^*}},$ we have 
$$g_{s_l^*}^{n+1}(\tilde{u}) = s_{2,l}^*\cdot g_{s_l^*}^n\left(\frac{b_{c_l^*}^2(0)-\tilde{u}}{{s_{1,l}^*}}\right)$$ Differentiate both sides with respect to $\tilde{u}$, we get
$$\left(g_{s_l^*}^{n+1}\right)'(\tilde{u}) = -\frac{s_{2,l}^*}{s_{1,l}^*} \cdot \left(g_{s_l^*}^n\right)'\left(\frac{b_{c_l^*}^2(0)-\tilde{u}}{{s_{1,l}^*}}\right)$$

\noindent Therefore,
\begin{eqnarray*}\label{eqlp}
\fl \left|\left(g_{s_l^*}^{n+1}\right)'(\tilde{u}_1) - \left(g_{s_l^*}^{n+1}\right)'(\tilde{u}_2)\right| &= \left|\frac{s_{2,l}^*}{s_{1,l}^*}\right|\cdot \left|\left(g_{s_l^*}^n\right)'\left(\frac{b_{c_l^*}^2(0)-\tilde{u}_1}{{s_{1,l}^*}}\right)-\left(g_{s_l^*}^n\right)'\left(\frac{b_{c_l^*}^2(0)-\tilde{u}_2}{{s_{1,l}^*}}\right)\right| \nonumber\\ 
&\leq \frac{s_{2,l}^*}{({s_{1,l}^*})^2} \cdot \lambda\left(g_{s_l^*}^n\right)'|\tilde{u}_1-\tilde{u}_2|
\end{eqnarray*}
 
\noindent From remark \ref{rm1}, we have $({s_{1,l}^*})^2 \geq {s_{2,l}^*}.$ Then,
 $$\lambda(g_{s_l^*}^{n+1})' \leq \lambda(g_{s_l^*}^{n})' \leq \lambda(g_{s_l^*}^{1})'.$$
\noindent Similarly, one can show that $$\lambda(g_{s_r^*}^{n+1})' \leq \lambda(g_{s_r^*}^{n})' \leq \lambda(g_{s_r^*}^{1})'.$$ Therefore, choose $\lambda = max\{ \lambda(g_{s_l^*}^{1})',\; \lambda(g_{s_r^*}^{1})' \}$ is the uniform Lipschitz bound. This completes the proof.   
\end{proof}

\noindent Note that for a given pair of proper scaling data $s^* = (s_l^*,s_r^*),$ the piece-wise affine map $f_{s^*}$ is infinitely renormalizable and $g_{s^*}$ is a $C^{1+Lip}$ extension of $f_{s^*}$. This implies $g_{s^*}$ is also renormalizable map. Further, we observe that $R g_{s^*}$ is an extension of $Rf_{s^*}.$ Therefore $R g_{s^*}$ is renormalizable. Hence, $g_{s^*}$ is infinitely renormalizable map  which is not a $C^2$ map. Then we have the following theorem,

\begin{thm}\label{thm:ext}
\ There exists an infinitely renormalizable $C^{1+Lip}$ bimodal map $g_{s^*}$ such that $$ R g_{s^*} = g_{s^*} .$$
\end{thm}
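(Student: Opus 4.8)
The plan is to assemble Theorem~\ref{thm:ext} essentially as a corollary of the machinery already in place, since all the analytic content has been front-loaded. First I would record that, by Theorem~\ref{thm1}, for the pair of proper scaling data $s^* = (s_l^*, s_r^*)$ the piece-wise affine map $f_{s^*}$ lies in $B_\infty$ and satisfies $Rf_{s^*} = f_{s^*}$; by the two preceding propositions its canonical extension $g_{s^*} = g_{s_l^*}\oplus g_{s_r^*}$ is a genuine $C^{1+Lip}$ bimodal map (and, because the horizontal contraction of $S_l,S_r$ dominates the vertical one while $(s_{1,i}^*)^2 \ge s_{2,i}^*$ fails to upgrade the modulus of continuity of the derivative to a H\"older exponent bigger than what a $C^2$ map would force, $g_{s^*}$ is not $C^2$). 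So the only thing genuinely left to check is that the extension intertwines with renormalization: $R g_{s^*} = g_{s^*}$.

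The key step is therefore to show $Rg_{s^*}$ is again an extension of $Rf_{s^*} = f_{s^*}$ and is built from $g_{s^*}$ by exactly the same scaling-function construction, so that it must coincide with $g_{s^*}$ by uniqueness of that construction. Concretely, I would argue on the left piece (the right piece being the reflection across $(\tfrac12,\tfrac12)$). The renormalization $R^l g_{s_l^*} = h_{s_l^*,1}^{-1}\circ g_{s_l^*}^{3}\circ h_{s_l^*,1}$; on the affine part $D_{s_l^*}$ this restricts to $R^l f_{s_l^*} = f_{s_l^*}$ by Lemmas~\ref{lem1}--\ref{lem2} and Proposition~\ref{pl}. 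Off the affine part, I would check that conjugating the graph pieces $G_l^1, G_l^2$ by $h_{s_l^*,1}$ and pulling back by the affine maps $\tilde h_{s_l^*,1}, \hat h_{s_l^*,1}$ reproduces precisely the action of the scaling operator $S_l$ on those pieces — that is, $S_l(G_l^k)$ is carried to $G_l^k$ under renormalization, because $S_l$ is the $(x,y)$-scaling whose first coordinate is $F_{1,l}(1)$ conjugated appropriately and whose second coordinate is the $s_{2,l}^*$-contraction recording one application of $f_{s_l^*}$. Since $g_{s_l^*}$'s graph is by definition $\bigcup_{n\ge1} S_l^n(G_l^1\cup G_l^2)$ glued to the affine skeleton, and renormalization shifts the index $n\mapsto n-1$ while fixing the skeleton, the self-similarity $\sigma(s_l^*) = s_l^*$ forces $R^l g_{s_l^*} = g_{s_l^*}$. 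The same on the right gives $R^r g_{s_r^*} = g_{s_r^*}$, and joining, $Rg_{s^*} = R^l g_{s_l^*}\oplus R^r g_{s_r^*} = g_{s_l^*}\oplus g_{s_r^*} = g_{s^*}$.

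I would then close by noting infinite renormalizability: the identity $Rg_{s^*} = g_{s^*}$ is self-propagating, so $R^n g_{s^*} = g_{s^*}$ is defined for all $n$, hence $g_{s^*}\in B_\infty$ in the $C^{1+Lip}$ category; and the preceding discussion already observed $g_{s^*}$ is not $C^2$. Putting these together yields the theorem.

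The main obstacle is the middle step: making rigorous that the $C^1$-extension construction \emph{commutes} with $R$, i.e. that the ad hoc $C^{1+Lip}$ curve chosen to fill the central gaps of $f_{s^*}$ is itself respected (up to the scaling $S_l, S_r$) by the renormalization operator. One has to verify that $h_{s_l^*,1}$ maps the gap interval $[z_0,y_1]$ (resp. $[y_0,z_1]$) onto the corresponding next-level gap, that the vertical rescaling induced by one pass through $g_{s_l^*}^3$ is exactly $s_{2,l}^*$ (not merely up to a bounded factor), and that orientation is handled correctly in the two branches $R_n^{l\pm}$. This is where the bookkeeping with the labelled endpoints $x_n, y_n, z_n, w_n$ and the definitions of $\tilde h_{s_l^*,n}, \hat h_{s_l^*,n}$ has to be deployed carefully; everything else is a direct appeal to results already proved.
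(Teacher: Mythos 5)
Your proposal is correct and follows essentially the same route as the paper: deduce the fixed-point identity from $Rf_{s^*}=f_{s^*}$ together with the self-similarity of the extension under the scaling maps $S_l, S_r$. In fact you are more careful than the paper at the one delicate point — the paper only remarks that $Rg_{s^*}$ is \emph{an} extension of $Rf_{s^*}$ and then asserts the theorem, whereas you correctly flag that one must check the gap-filling construction commutes with $R$ (i.e.\ that $S_l$, $S_r$ intertwine with the conjugacies $h_{s_l^*,1}$, $h_{s_r^*,1}$) so that $Rg_{s^*}$ is the \emph{same} extension, namely $g_{s^*}$.
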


\section{Topological entropy of renormalization}\label{entropy}
\ In this section, we calculate the topological entropy of the renormalization operator defined on the space of $ C^{1+Lip}$ bimodal maps. \\ Let us consider three pairs of $C^{1+Lip}$ maps $\phi_i : [0,z_1] \cup [y_1,b_{c_l^*}(0)] \rightarrow [0,b_{c_l^*}(0)] $ and  $\psi_i : [\tilde{b}_{c_r^*}(1),y_1'] \cup [z_1',1] \rightarrow [\tilde{b}_{c_r^*}(1),1],$ for $i = 0,1,2,$ which extend $f_{s^*}.$ Because  of symmetricity, $\psi_i(x)  = 1-\phi_i(1-x).$ For a sequence $\alpha = \{\alpha_n\}_{n \geq 1} \in \Sigma_3,$ \\ where $\Sigma_3 =  \{ \{x_n\}_{n \geq 1} : x_n \in \{0,\;1,\;2\} \}$ is called full 3-Shift. \\ Now define $$G_l^n(\alpha) = S_l^n(graph \; \phi_{{\alpha}_n}) \;\;\;\; \textrm{and} \;\;\;\; G_r^n(\alpha) = S_r^n(graph \; \psi_{{\alpha}_n}), $$ we have $$G_l(\alpha) =  \bigcup\limits_{n \geq 1} G_l^{n}(\alpha) \;\;\;\; \textrm{and} \;\;\;\; G_r(\alpha) =  \bigcup\limits_{n \geq 1} G_r^{n}(\alpha).$$ Therefore, we conclude that $G(\alpha) = G_l(\alpha) \oplus  G_r(\alpha)$ is the graph of a $C^{1+Lip}$ bimodal map $b_{\alpha}$ by using the same facts of Section \ref{extsn}. \\ The shift map $\sigma : \Sigma_3 \rightarrow \Sigma_3$ is defined as $$\sigma(\alpha_1 \alpha_2 \alpha_3 \ldots) = (\alpha_2 \alpha_3 \alpha_4 \ldots).$$ 

\begin{prop}
The restricted maps $b_\alpha^3 : [y_1,\;z_1] \rightarrow [y_1,\;z_1] $ and $b_\alpha^3 : [y_1',\;z_1'] \rightarrow [y_1',\;z_1'] $ are the unimodal maps for all $\alpha \in \Sigma_3.$ In particular, $b_\alpha$ is a renormalizable map and $R b_\alpha = b_{\sigma(\alpha)}.$
\end{prop}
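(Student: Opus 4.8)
The plan is to reduce the claim to two facts about the graph construction: (a) the renormalization domain of $b_\alpha$ is exactly the pair of intervals $[y_1,z_1]$ and $[y_1',z_1']$, and (b) the first return map $b_\alpha^3$ restricted to either of these intervals, after the affine rescaling $h_{s_l^*,1}$ (resp.\ $h_{s_r^*,1}$), reproduces the graph $G(\sigma(\alpha))$. First I would identify the combinatorics: by construction $G_l^1(\alpha) = S_l(\text{graph }\phi_{\alpha_1})$ and $G_l^2(\alpha) = S_l(\text{graph }\phi_{\alpha_1})$-type pieces sit over the intervals adjacent to $c_l^*$, while the piece $G_l^n(\alpha)$ for $n\ge 2$ is $S_l^{n-1}$ applied to a piece of $G(\sigma(\alpha))$. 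The key algebraic observation is that $S_l$ conjugates the dynamics one level down: since $S_l$ acts as $(x,y)\mapsto(b_{c_l^*}^2(0)-s_{1,l}^*x,\; s_{2,l}^*y)$ and $h_{s_l^*,1}$ is precisely the affine map realizing the first coordinate of $S_l$ followed by the identification $I_{1,l}^1 \to I_L$, one gets $h_{s_l^*,1}^{-1}\circ b_\alpha^3\circ h_{s_l^*,1}$ agreeing with $b_{\sigma(\alpha)}$ on its domain. This is the same mechanism used in Lemma~\ref{lem1} and Lemma~\ref{lem2}, now carried out at the level of the $C^{1+Lip}$ extensions rather than the piece-wise affine skeleton.

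Concretely, the steps in order: (1) Show $[y_1,z_1]$ is forward-invariant under $b_\alpha^3$ and that $b_\alpha,\ b_\alpha^2$ map it off itself, so conditions (i),(ii) of the definition of renormalizable hold for the left critical point; argue symmetrically for $[y_1',z_1']$ using $\psi_i(x)=1-\phi_i(1-x)$ and the reflection symmetry of the whole construction across $(1/2,1/2)$. (2) Verify that $b_\alpha^3|_{[y_1,z_1]}$ is unimodal: it is a composition of three monotone branches of $b_\alpha$ (the middle increasing branch twice and one of the outer decreasing branches, according to the combinatorics $I_{1,l}^1\to I_{2,l}^1\to I_{0,l}^1\to I_{1,l}^1$ shown in Figure~\ref{fig:ptr}), whose composition has a single turning point inherited from the critical point $c_l^*$; the $C^{1+Lip}$ regularity follows from the chain rule since each branch is $C^{1+Lip}$ and the composition of $C^{1+Lip}$ maps with bounded derivative is $C^{1+Lip}$. (3) Apply suitable reflections/rescalings as in condition (iii) so that $b_\alpha'|_{[y_1,z_1]}$ and $b_\alpha'|_{[y_1',z_1']}$ join to a bimodal map, giving renormalizability of $b_\alpha$. (4) Finally compute $Rb_\alpha$: on $D_{s_l^*}$ it equals $h_{s_l^*,1}^{-1}\circ b_\alpha^3\circ h_{s_l^*,1}$, which by the conjugacy in step~(1)–(2) and the self-similar definition $G_l^n(\alpha)=S_l^n(\text{graph }\phi_{\alpha_n})$ sends level-$(n+1)$ data of $\alpha$ to level-$n$ data; since $S_l^{-1}\circ(\text{level } n+1) = (\text{level }n)$ of $\sigma(\alpha)$, the graph of $Rb_\alpha$ is $G(\sigma(\alpha))$, i.e.\ $Rb_\alpha = b_{\sigma(\alpha)}$. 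The symmetric computation on $D_{s_r^*}$ uses $R^r$ and $h_{s_r^*,1}$.

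The main obstacle I expect is step~(2): verifying that the composed return map $b_\alpha^3$ on $[y_1,z_1]$ is genuinely \emph{unimodal} (one turning point, correct orientation) for \emph{every} choice of extension data $\phi_{\alpha_n}$, not just the affine skeleton — one must check the turning point coming from $c_l^*$ is the only one and that the outer branches, although now governed by the arbitrary $C^{1+Lip}$ maps $\phi_{\alpha_n}$ rather than fixed affine pieces, are still monotone on the relevant subintervals. This monotonicity is guaranteed because the $\phi_i$ are required to be $C^{1+Lip}$ extensions of $f_{s^*}$ and the relevant subintervals of $[y_1,z_1]$ and $[y_1',z_1']$ lie in the affine part $D_{s_l^*}\cup D_{s_r^*}$ where the map is already pinned down, so the only freedom is on the two small neighborhoods of the critical points where unimodality is built in by the concavity conventions in Definition~1. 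Once this is in place, the identity $Rb_\alpha = b_{\sigma(\alpha)}$ is a bookkeeping consequence of the self-similarity of the graph, exactly parallel to Lemma~\ref{lem2}.
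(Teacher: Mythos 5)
Your proposal is correct and follows essentially the same route as the paper: $b_\alpha^3$ on $[y_1,z_1]$ is the unimodal restriction $b_\alpha:[y_1,z_1]\to I_{2,l}^1$ followed by the two affine (hence monotone) pieces $I_{2,l}^1\to I_{0,l}^1\to[y_1,z_1]$, which lie in $D_{s_l^*}$ and so are independent of the choice of $\phi_{\alpha_n}$; the symmetric argument handles $[y_1',z_1']$, and $Rb_\alpha=b_{\sigma(\alpha)}$ follows from the self-similarity $G_l^n(\alpha)=S_l^n(\mathrm{graph}\,\phi_{\alpha_n})$. Only note that your phrase ``composition of three monotone branches'' is internally inconsistent with the single turning point you then (correctly) attribute to $c_l^*$ -- the first factor is unimodal, not monotone -- but the rest of your write-up makes clear you have the right picture.
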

\begin{proof}
\ We know that $b_\alpha :  [y_1,\;z_1] \rightarrow I_{2,l}^1 $ is a unimodal and onto, $b_\alpha :  I_{2,l}^1 \rightarrow I_{0,l}^1 $ is onto and affine and also $b_\alpha : I_{0,l}^1 \rightarrow  [y_1,\;z_1]  $ is onto and affine. Therefore $b_\alpha^3$ is a unimodal map on $[y_1,\;z_1]$. Analogously, $b_\alpha^3$ is a unimodal map on $[y_1',\;z_1']$. The above construction implies $$R b_\alpha = b_{\sigma(\alpha)}.$$
\end{proof}

\noindent This gives us the following theorem.

\begin{thm} \label{prep2}
\ The renormalization operator $R$ acting on the space of $C^{1+Lip}$ bimodal maps has unbounded topological entropy.
\end{thm}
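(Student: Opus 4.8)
The plan is to exhibit, inside the space of $C^{1+Lip}$ bimodal maps, an invariant set on which the renormalization operator $R$ acts as (a factor of) the full one-sided $3$-shift $(\Sigma_3,\sigma)$; since $h_{\mathrm{top}}(\sigma|_{\Sigma_3}) = \log 3$, iterating this construction will force the entropy of $R$ to be unbounded. The previous proposition has already done the essential work: for every $\alpha \in \Sigma_3$ we have a $C^{1+Lip}$ bimodal map $b_\alpha$ with $R b_\alpha = b_{\sigma(\alpha)}$, and the assignment $\alpha \mapsto b_\alpha$ is injective (distinct symbol sequences produce distinct extensions $\phi_{\alpha_n}$ at some level $n$, hence distinct graphs $G(\alpha)$). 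So $\{b_\alpha : \alpha \in \Sigma_3\}$ is an $R$-invariant set on which $R$ is conjugate to $\sigma$, giving $h_{\mathrm{top}}(R) \geq \log 3$.

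To upgrade $\log 3$ to $+\infty$ I would replace the single renormalization step by its $k$-th iterate, or equivalently enlarge the symbol alphabet. Concretely, instead of choosing one extension $\phi_{\alpha_n} \in \{\phi_0,\phi_1,\phi_2\}$ per scale, choose a block of $k$ consecutive scales freely: this realizes the full shift on $3^k$ symbols $\Sigma_{3^k}$ as a subsystem on which $R^k$ acts as the shift, so $h_{\mathrm{top}}(R^k) \geq \log 3^k = k\log 3$, whence $h_{\mathrm{top}}(R) \geq \log 3$ again — not yet enough. The genuine gain comes from noticing that there is no reason to restrict to three extensions: for each $i\in\{0,1,2\}$ there is in fact an infinite-dimensional family of admissible $C^{1+Lip}$ extensions $\phi$ of $f_{s^*}$ on $[0,z_1]\cup[y_1,b_{c_l^*}(0)]$ (any $C^{1+Lip}$ curve joining the prescribed boundary $1$-jets with the right monotonicity and with Lipschitz bound compatible with the contraction estimate from Remark~\ref{rm1}). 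Picking $N$ such extensions $\phi^{(1)},\dots,\phi^{(N)}$ for a fixed branch yields, by the same join-and-rescale construction of Section~\ref{extsn}, an $R$-invariant copy of the full $N$-shift $\Sigma_N$, hence $h_{\mathrm{top}}(R) \geq \log N$ for every $N$, i.e. $h_{\mathrm{top}}(R) = \infty$.

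The steps, in order: (1) fix the branch, say the left one, and produce for each $N$ a finite family $\{\phi^{(j)}\}_{j=1}^N$ of pairwise-distinct $C^{1+Lip}$ maps $[0,z_1]\cup[y_1,b_{c_l^*}(0)]\to[0,b_{c_l^*}(0)]$ extending $f_{s^*}$, each matching the boundary $1$-jets of $b_{c_l^*}$ and each satisfying the uniform Lipschitz-derivative bound; (2) for $\beta\in\Sigma_N$ set $G_l^n(\beta)=S_l^n(\mathrm{graph}\,\phi^{(\beta_n)})$, $G_l(\beta)=\bigcup_n G_l^n(\beta)$, and on the right use the symmetric reflection $\psi^{(j)}(x)=1-\phi^{(j)}(1-x)$, so that $G(\beta)=G_l(\beta)\oplus G_r(\beta)$ is the graph of a $C^{1+Lip}$ bimodal map $b_\beta$ — this is verbatim the argument proving $g_{s^*}$ is $C^{1+Lip}$, the point being that the contraction $s_{2,l}^*\le (s_{1,l}^*)^2$ is what makes the Lipschitz constants of the derivatives non-increasing under $S_l$, independently of which extension is inserted at each scale; (3) check $R b_\beta = b_{\sigma(\beta)}$ exactly as in the preceding proposition, since the first-return dynamics on $[y_1,z_1]$ only sees the $n=1$ extension and the affine branches, and rescaling by $h_{s_l,1}^{-1}$ strips off one symbol; (4) check injectivity of $\beta\mapsto b_\beta$; (5) conclude $h_{\mathrm{top}}(R)\ge h_{\mathrm{top}}(\sigma|_{\Sigma_N})=\log N$ for all $N$, hence $h_{\mathrm{top}}(R)=\infty$.

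The main obstacle is step (1)–(2): one must confirm that inserting an arbitrary admissible extension at the top scale, followed by the fixed self-similar tower $S_l^n$, genuinely produces a $C^1$ — indeed $C^{1+Lip}$ — function and not merely a continuous one. Continuous differentiability at the junction points $p_l^n$ is handled exactly as in the proof that $g_{s^*}\in C^1$ (the one-sided slopes at $y_1,z_1$ are matched by the chosen extension's $1$-jet on one side and by the affine branch on the other, and $S_l$ transports this matching down the tower); the uniform Lipschitz bound on derivatives follows from Remark~\ref{rm1} as in the proof that $g_{s^*}\in C^{1+Lip}$, now with $\lambda=\max_j \lambda\big((\phi^{(j)})'\big)$. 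The symmetry hypothesis $\psi=1-\phi(1-\cdot)$ guarantees the two unimodal pieces join into a legitimate symmetric bimodal map and that the reflection across $(\tfrac12,\tfrac12)$ used throughout Section~\ref{extsn} still applies. Everything else is bookkeeping already carried out in Sections~\ref{extsn} and the proposition above.
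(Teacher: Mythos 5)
Your proposal is correct and follows essentially the same route as the paper: embed a full $N$-shift into the domain of $R$ by allowing $N$ distinct $C^{1+Lip}$ extensions of $f_{s^*}$ at each scale, conclude $h_{\mathrm{top}}(R)\ge \log N$ for every $N$, and hence that the entropy is unbounded. You in fact supply more detail than the paper does on why arbitrary admissible extensions still yield $C^{1+Lip}$ maps satisfying $Rb_\beta=b_{\sigma(\beta)}$ and on the injectivity of $\beta\mapsto b_\beta$.
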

\begin{proof}
\ From the above construction, we conclude that $\alpha \longmapsto b_\alpha \in C^{1+Lip}$ is injective.  The domain of $R$ contains two copies, namely $\Lambda_1$ and $\Lambda_2,$  of the full 3-shift. As topological entropy $h_{top}$ is an invariant of topological conjugacy. Hence $h_{top}(\left.R\right\vert_{\Lambda_1 \cup \Lambda_2} ) > \ln 3.$ In fact, if we choose $n$ different pairs of $C^{1+Lip}$ maps, say, $\phi_0,\;\phi_1,\;\phi_2,\ldots \phi_{n-1}$ and $\psi_0,\;\psi_1,\;\psi_2,\ldots \psi_{n-1},$ which extends $f_{s^*},$ then it will be embedded two copies of the full $n-\textrm{shift}$ in the domain of $R$.  Hence, the topological entropy of $R$ on $C^{1+Lip}$  bimodal maps is unbounded. 
\end{proof}

\section{An $\epsilon$ perturbation of the scaling data}\label{pert}
\ In this section, we use an $\epsilon$ perturbation on the construction of the scaling data as presented in Section $\ref{p2},$ to obtain the following theorem

\begin{thm}
\  There exists a continuum of fixed points of the renormalization operator acting on $C^{1+Lip}$ bimodal maps.
\end{thm}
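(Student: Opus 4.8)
The plan is to imitate the construction of Section~\ref{p2} with the scaling data $s^*=(s_l^*,s_r^*)$ replaced by a one-parameter family of admissible scaling data, and then invoke the machinery of Theorem~\ref{thm1} and Theorem~\ref{thm:ext} to manufacture a distinct renormalization fixed point for each parameter value. First I would fix a small $\epsilon>0$ and, for each $t$ in an interval $J_\epsilon\subset\mathbb R$, define a perturbed scaling datum $s_l^{*,t}:\mathbb N\to\Delta^3$ which agrees with $s_l^*$ except on, say, the first coordinate block, where it is moved by an amount of order $\epsilon t$; the point is to choose the perturbation so that (a) $s_l^{*,t}$ remains proper, i.e.\ $d(s_l^{*,t}(n),\partial\Delta^3)\geq\epsilon'$ for some $\epsilon'>0$ uniformly in $n$ and $t$, (b) the gap-ratio positivity conditions \eqref{g1}--\eqref{eq6a} continue to hold, so that the associated intervals $I_{i,l}^n$ stay pairwise disjoint and nested, and (c) the inequality $s_{2,l}^{*,t}\leq (s_{1,l}^{*,t})^2$ of Remark~\ref{rm1} is preserved, which is what the $C^{1+Lip}$ extension argument of Section~\ref{extsn} needs. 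Since $F_d^l$ is an open set and $c_l^*$ lies in its interior, such a family exists for $\epsilon$ small; the same is done on the right with $s_r^{*,t}$, respecting the reflection symmetry $\psi_i(x)=1-\phi_i(1-x)$ so that the resulting bimodal map stays symmetric.

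Next, for each $t$ I would run the fixed-point construction: one wants the perturbed datum to be shift-invariant, $\sigma(s_l^{*,t})=s_l^{*,t}$, which by Lemma~\ref{lem1} forces $R^l f_{s_l^{*,t}}=f_{s_l^{*,t}}$. The cleanest route is to demand that the perturbation be \emph{constant in $n$}: set $s_l^{*,t}(n)=\xi(t)$ for all $n$, where $\xi(t)\in\Delta^3$ is a curve through $s_l^*(1)=\dots$ chosen inside the feasible region. For a constant scaling datum the self-consistency equations \eqref{eq01}--\eqref{eq04} collapse to the single requirement that the associated critical point $c$ be a fixed point of $\mathcal R$; but here I instead read the construction backwards — I prescribe the (constant) ratios $\xi(t)$ directly and let them define the nested intervals $I_{1,l}^n$ and hence the point $\bigcap_n I_{1,l}^n$, bypassing the need for $\mathcal R$ to have a fixed point. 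Then $f_{s^{*,t}}$ with $s^{*,t}=(s_l^{*,t},s_r^{*,t})$ is, by the argument proving Theorem~\ref{thm1}, a piece-wise affine infinitely renormalizable map with $R f_{s^{*,t}}=f_{s^{*,t}}$, and by the two propositions of Section~\ref{extsn} it extends to a $C^{1+Lip}$ bimodal map $g_{s^{*,t}}$ with $R g_{s^{*,t}}=g_{s^{*,t}}$, exactly as in Theorem~\ref{thm:ext}.

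Finally I would show the map $t\mapsto g_{s^{*,t}}$ is injective, so that the family is genuinely a continuum of \emph{distinct} fixed points. This is where the real content lies: two fixed points $g_{s^{*,t}}$ and $g_{s^{*,t'}}$ are built from scaling data with different ratios $\xi(t)\neq\xi(t')$, and the scaling ratios are recovered from the invariant Cantor set as the ratios of lengths $|I_{2,l}^n|/|I_{1,l}^{n-1}|$ etc.; hence different $t$ give non-affinely-conjugate, indeed simply different, maps on the nose (their restrictions to the affine pieces near the critical point already have different slopes). I expect the main obstacle to be bookkeeping rather than conceptual: one must check that the perturbation can be made simultaneously compatible with \emph{all} the constraints — properness, the two gap positivity conditions, the disjointness at every level, the Remark~\ref{rm1} inequality, and the symmetry — and that these cut out a set with nonempty interior around $s^*$ in the space of constant scaling data, so that a nondegenerate arc $\xi(J_\epsilon)$ fits inside. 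Since each individual constraint is an open condition satisfied strictly at $s^*$ (the feasible domain $F_d^l$ is open and $c_l^*$ is interior to it, and $s_{2,l}^*<(s_{1,l}^*)^2$ can be arranged to be strict), a finite intersection of such open conditions is open and nonempty, which yields the desired continuum; I would close the argument by remarking, as the paper does, that two different perturbed data give two different Cantor attractors, establishing non-rigidity.
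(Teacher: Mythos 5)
There is a genuine gap at the heart of your second paragraph, where you propose to ``prescribe the (constant) ratios $\xi(t)$ directly \ldots bypassing the need for $\mathcal R$ to have a fixed point.'' In this construction the scaling ratios of an infinitely renormalizable piece-wise affine map are \emph{not} free parameters: $f_{s_l}$ is defined as the affine interpolation of $b_c$ over the endpoints of the intervals $I_{i,l}^n$, and the renormalizability conditions (the maximal-domain and image conditions in the definition of ``infinitely renormalizable'') force the ratios to be exactly the expressions \eqref{eq01}--\eqref{eq03} in terms of the orbit of $0$ under $b_{c_n}$, together with the recursion $c_{n+1}=\mathcal R(c_n)$ of \eqref{eq04}. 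Shift-invariant (constant) scaling data therefore requires $c$ to be a fixed point of $\mathcal R$, and $\mathcal R$ has a \emph{unique} fixed point $c_l^*$ in the feasible domain $F_d^l$; this is precisely why Theorem~\ref{thm1} asserts $B_\infty=\{f_{s^*}\}$. An arbitrary constant datum $\xi(t)\in\Delta^3$ lying in the open feasible region but not equal to $s_l^*$ produces a map $f_{s^{*,t}}$ that fails condition (2) of infinite renormalizability (e.g.\ $f_{s_l}^{3^n-1}([0,f_{s_l}(y_n)])\neq I_{1,l}^n$), so Lemma~\ref{lem1} does not apply and $Rf_{s^{*,t}}=f_{s^{*,t}}$ does not follow. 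Your family would thus either collapse to the single point $s^*$ or consist of maps that are not renormalization fixed points; openness of $F_d^l$ buys you nothing here because $F_d^l$ parametrizes candidate critical points $c$, not a continuum of admissible ratio triples.

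The degree of freedom the paper actually exploits is different: it perturbs the \emph{construction} of the interval endpoints, replacing the orbit point $b_c^4(0)$ by $\epsilon\, b_c^4(0)$ (and correspondingly $b_c^2(0)-b_c(\epsilon\, b_c^4(0))$ in $s_{1,l}$), which moves the outer endpoint of $I_{0,l}^1$ while keeping the consistency $f(\partial I_{0,l}^1)\subset\partial I_{1,l}^1$ because $f$ still agrees with $b_c$ at the new endpoints. For each fixed $\epsilon$ near $1$ one must then \emph{re-solve} the self-consistency equation, i.e.\ find the fixed point $c_\epsilon^*$ of the perturbed map $\mathcal R(\cdot,\epsilon)$ (which persists since $c_l^*$ is a transverse, expanding fixed point of $\mathcal R(\cdot,1)$); the continuum is parametrized by $\epsilon$, not by freely chosen ratios. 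Your remaining points --- that properness, the gap conditions, the inequality of Remark~\ref{rm1}, and the symmetry are open conditions that survive small perturbations, and that distinct scaling data yield distinct Cantor attractors and hence non-rigidity --- are correct and match the paper, but they only become available after the fixed-point equation for the perturbed $\mathcal R$ is solved, a step your proposal explicitly discards.
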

\begin{proof}
 Consider an $\epsilon$ variation on scaling data and we modify the construction which is described in section ~\ref{p2}.
 
 Let us define the neighborhoods $N_{\epsilon}^l$ and $N_{\epsilon}^r$ about the respective points $(b_{c}^3(0),\; b_{c}^4(0))$  and $(b_{c}^3(1),\; b_{c}^4(1))$  as 
 \begin{eqnarray*}
N_{\epsilon}^l(b_{c}^3(0),\; b_{c}^4(0)) = \{ (b_{c}^3(0),\; \epsilon \cdot b_{c}^4(0)) \; : \; \epsilon > 0 \; \textrm{ and} \; \epsilon \; \textrm {close to } 1  \} \\ 
N_{\epsilon}^r(b_{c}^3(1),\; b_{c}^4(1)) = \{ (b_{c}^3(1),\; \epsilon \cdot b_{c}^4(1)) \; : \; \epsilon > 0 \; \textrm{ and} \; \epsilon \; \textrm {close to } 1  \}
 \end{eqnarray*}
\noindent \textbf{Case (i)} The perturbed scaling data on $I_0^l,$ then the scaling ratios are defined as 
\begin{eqnarray*}
s_{2,l}(c,\epsilon) &=   \frac{b_{c}^3(0)}{b_{c}(0)}  \\ 
s_{0,l}(c,\epsilon) &=  \frac{b_{c}(0)-\epsilon b_{c}^4(0)}{b_{c}(0)}  \\
s_{1,l}(c,\epsilon) &=  \frac{b_{c}^2(0)-b_{c}(\epsilon b_{c}^4(0))}{b_{c}(0)},  
\end{eqnarray*} 
where $c \in (0, \frac{3-\sqrt{3}}{6}).$ Also, we define
\begin{eqnarray*}
	\mathcal{R}(c,\epsilon) &= \frac{b_{c}^2(0)-c}{s_{1,l}(c,\epsilon)}.
\end{eqnarray*}
From subsection~\ref{p21}, we know that the map $\mathcal{R}$ which is defined in Eqn. \ref{eq04}, has unique fixed point $c^*$. Consequently, for a given $\epsilon$ close to $1,$ $\mathcal{R}(c,\epsilon)$ has only one unstable fixed point, namely $c_{\epsilon}^*.$ Therefore, we consider the perturbed scaling data ${s}_{l,\epsilon}^* : \mathbb{N} \rightarrow \Delta^3$ with 
\begin{eqnarray*}
	{s}_{l,\epsilon}^* = \left( \frac{b_{c_{\epsilon}^*}(0)-\epsilon b_{c_{\epsilon}^*}^4(0)}{b_{c_{\epsilon}^*}(0)}, \; \frac{b_{c_{\epsilon}^*}^2(0)-b_{c_{\epsilon}^*}(\epsilon b_{c_{\epsilon}^*}^4(0))}{b_{c_{\epsilon}^*}(0)}, \;\frac{b_{c_{\epsilon}^*}^3(0)}{b_{c_{\epsilon}^*}(0)} \right).
\end{eqnarray*}

\noindent Then $\sigma(s_{l,\epsilon}^* ) = {s}_{l,\epsilon}^* $ and using Lemma \ref{lem1}, we have
$$R^lf_{{s}_{l,\epsilon}^*} = f_{{s}_{l,\epsilon}^*}. $$

\noindent \textbf{Case (ii)} Considering the perturbed scaling data on $I_0^r,$ one have the scaling data ${s}_{r,\epsilon}^* : \mathbb{N} \rightarrow \Delta^3$ with 
\begin{eqnarray*}
	{s}_{r,\epsilon}^* = \left( \frac{\epsilon b_{c_{\epsilon}^*}^4(1)-b_{c_{\epsilon}^*}(1)}{1-b_{c_{\epsilon}^*}(1)}, \; \frac{b_{c_{\epsilon}^*}(\epsilon b_{c_{\epsilon}^*}^4(1))-b_{c_{\epsilon}^*}^2(1)}{1-b_{c_{\epsilon}^*}(1)}, \;\frac{1-b_{c_{\epsilon}^*}^3(1)}{1-b_{c_{\epsilon}^*}(1)} \right).
\end{eqnarray*}

\noindent Then $\sigma(s_{r,\epsilon}^* ) = {s}_{r,\epsilon}^* $ and using Lemma \ref{lem1r}, we have
$$R^rf_{{s}_{r,\epsilon}^*} = f_{{s}_{r,\epsilon}^*}. $$
 
Moreover, $f_{{s}_{l,\epsilon}^*}$ and $f_{{s}_{r,\epsilon}^*}$ are the piece-wise affine maps which are infinitely renormalizable. For a given pair of proper scaling data $ {s}_{\epsilon}^* = ({s}_{l,\epsilon}^*,\;{s}_{r,\epsilon}^*),$ we have $$Rf_{{s}_{\epsilon}^*} = f_{{s}_{\epsilon}^*} $$

Now we use similar extension described in section~\ref{extsn}, then we get $g_{{s}_{\epsilon}^*}$ is the $C^{1+Lip}$ extension of $f_{{s}_{\epsilon}^*}$. This implies that $g_{{s}_{\epsilon}^*}$ is a renormalizable map. As $R g_{{s}_{\epsilon}^*}$ is an extension of $Rf_{{s}_{\epsilon}^*}.$ Therefore $R g_{{s}_{\epsilon}^*}$ is renormalizable. Hence, for each $\epsilon$ close to $1,$ $g_{{s}_{\epsilon}^*}$ is a fixed point of the renormalization. This proves the existence of a continuum of fixed points of the renormalization.
\end{proof}

\begin{rem}
	In particular, for two different perturbed scaling data  $s_{\epsilon_1^*}$ and $s_{\epsilon_2^*},$ one can  construct two infinitely renormalizable maps $g_{s_{\epsilon_1^*}}$ and $g_{s_{\epsilon_2^*}}.$ Therefore, the respective Cantor attractors will have different scaling ratios. Consequently, it shows the non-rigidity for low smooth symmetric bimodal maps.
\end{rem}

\section{Conclusions}
\ In  this paper, we have investigated the existence of fixed point of the renormalization operator which is defined on the space of piece-wise affine infinitely renormalizable maps. For a given pair of proper scaling data $s^* = (s_l^*,\; s_r^*),$ we have first constructed the piece-wise affine infinitely renormalizable map $f_{s^*}$ which is the only fixed point of the renormalization. We observe that the geometry of invariant Cantor set is more complex than the geometry of the Cantor set of piece-wise affine period doubling renormalizable map\cite{CMMT}. Further, we have extended this fixed point $f_{s^*}$ to a $C^{1+Lip}$ symmetric bimodal map. Moreover, we proved that the renormalization operator acting on the space of $C^{1+Lip}$ symmetric bimodal maps has infinite topological entropy. Finally, we proved the existence a continuum of fixed points of renormalization by considering a small perturbation on the scaling data. Consequently, it showed  the non-rigidity of the Cantor attractors of infinitely renormalizable symmetric bimodal maps of low smoothness.

\section*{References}


\begin{thebibliography}
\


\bibitem{Fe}
Feigenbaum M J 1978, 
\newblock { Quantitative universality for a class of non-linear transformations,}
{\it J. Stat. Phys.}, {\bf 19} 25-52.

\bibitem{Fe2}
Feigenbaum M J 1979, 
\newblock{ The universal metric properties of nonlinear transformations,}
{\it  J. Stat. Phys.}, {\bf 21} 669-706.

\bibitem{CT}
 Coullet P, Tresser C 1978,
\newblock { It\'{e}ration d'endomorphisms et groupe de renormalisation,}
{\it  J. Phys. Colloque },  {\bf C5} 25-28.

\bibitem{JD}
Jonkar L, Rand D 1980,
\newblock {Bifurcations in one dimension I. The nonwandering set,}
{\it Invent Math},  {\bf 62} 347–365.

\bibitem{SVST}
Strien S V 1988,
\newblock { Smooth dynamics on the interval, in : New Directions in Dynamical Systems, eds. T. Bedford and J. Swift,}
{\it Cambridge Univ. Press, Cambridge },   57-119.

\bibitem{MCT}
Mackay R S, Tresser C 1986,
\newblock { Transition to topological chaos for circle maps,}
{\it Physica D: Nonlinear Phenomena },  {\bf 19} 206-237.

\bibitem{DVeitch}
Veitch D 1994,
\newblock { Renormalization of $C^{0}$ bimodal maps, }
{\it Physica D}, {\bf 71}  269-284.

\bibitem{DSmania}
Smania D 2005, 
\newblock { Phase space universality for multimodal maps,}
{\it  Bull. Braz. Math. Soc.},  {\bf 36} 225-274.


\bibitem{CMMT}
Chandramouli V V M S, Martens M, Melo W de, Tresser C P 2009,
\newblock { Chaotic period doubling,}
{\it Ergodic Theory and Dynamical Systems}, {\bf 29} 381-418.

\bibitem{RK}
\newblock Kumar R, Chandramouli V V M S 2020,
\newblock {Period tripling and quintupling renormalizations below $C^2$ space}, preprint,
{\it arXiv:2010.01293 [math.DS]}.
 
\bibitem{tresser}
Tresser C 1991,
\newblock { Fine Structure of Universal Cantor Sets,}
 {\it  Instabilities and Nonequilibrium Structures III, E. Tirapegui and W. Zeller Eds., (Kluwer, Dordrecht/Boston/London} . 
{ 27-42.}

\bibitem{WDMVS}
Welington de Melo \& Sebastian van Strien,
\newblock {\it One-Dimensional dynamics,} (Springer Verlag, Berlin; 1993).





%




 



\end{thebibliography}
\end{document}